\newcommand{\cmark}{\ding{51}}%
\newcommand{\xmark}{\ding{55}}%
\theoremstyle{plain}
\newtheorem{theorem}{Theorem}[section]
\newtheorem{lemma}[theorem]{Lemma}
\newtheorem{proposition}[theorem]{Proposition}
\newtheorem{corollary}[theorem]{Corollary}
\theoremstyle{definition}
\newtheorem{definition}[theorem]{Definition}
\newtheorem{example}[theorem]{Example}
\newtheorem{assumption}[theorem]{Assumption}
\newtheorem{remark}[theorem]{Remark}
\numberwithin{equation}{section}
\numberwithin{figure}{section}
\DeclareMathOperator{\dist}{dist}
\DeclareMathOperator{\dista}{dist}
\DeclareMathOperator{\essinf}{ess \, inf}
  \def\mG{\mathsf{G}}
   \def\mF{\mathsf{F}}
  \def\mV{\mathsf{V}}
  \def\mE{\mathsf{E}}
  \def\mP{\mathsf{P}}
  \def\mK{\mathsf{K}}
  \def\mS{\mathsf{S}}
  \def\mI{\mathsf{I}}
  \def\mT{\mathsf{T}}
 \def\mv{\mathsf{v}}
 \def\me{\mathsf{e}}
 \def\mw{\mathsf{w}}
\newcommand{\R}{\mathbb{R}}
\newcommand{\N}{\mathbb{N}}
\def\XXint#1#2#3{{\setbox0=\hbox{$#1{#2#3}{\int}$ }
\vcenter{ \hbox{$#2#3$} }\kern-.45\wd0}}
\title{On the spectral gap of a quantum graph} 
\subjclass[2010]{}
\keywords{Quantum graphs, Sturm--Liouville problems, Bounds on spectral gaps}
\author[J.B.~Kennedy]{James B. Kennedy}
\author[P.~Kurasov]{Pavel Kurasov}
\author[G.~Malenová]{Gabriela Malenov\'a}
\author[D.~Mugnolo]{Delio Mugnolo}
\address{James B.\ Kennedy, Institut f\"ur  Analysis, Dynamik und Modellierung, Universit\"at Stuttgart, Pfaffenwaldring 57, D-70569 Stuttgart, Germany}
\email{james.kennedy@mathematik.uni-stuttgart.de}
\address{Pavel Kurasov, Department of Mathematics, Stockholm University, SE-106 91 Stockholm, Sweden}
\email{kurasov@math.su.se}
\address{Gabriela Malenová, Department of Mathematics, KTH Stockholm, SE-100 44 Stockholm, Sweden}
\email{malenova@kth.se}
\address{Delio Mugnolo, Lehrgebiet Analysis, Fakult\"at Mathematik und Informatik, Fern\-Universit\"at in Hagen, D-58084 Hagen, Germany}
\email{delio.mugnolo@fernuni-hagen.de}
\date{\today}
\thanks{Part of this work was completed while J.B.K.~was the recipient of a fellowship of the Alexander von Humboldt Foundation, Germany. G.M.\ and D.M.\ were partially supported by the Land Baden--W\"urttemberg in the framework of the \emph{Juniorprofessorenprogramm} -- research project on ``Symmetry methods in quantum graphs''. All four authors were partially supported by the Center for Interdisciplinary Research (ZiF) in Bielefeld in the framework of the cooperation group on ``Discrete and continuous models in the theory of networks". The authors would like to thank Jens Wirth for helpful discussions regarding Theorem~\ref{th:d-upper}.}
\begin{document}

\begin{abstract}
We consider the problem of finding universal bounds of ``isoperimetric'' or ``isodiametric'' type on the spectral gap of the Laplacian on a metric graph with natural boundary conditions at the vertices, in terms of various analytical and combinatorial properties of the graph: its total length, diameter, number of vertices and number of edges. We investigate which combinations of parameters are necessary to obtain non-trivial upper and lower bounds and obtain a number of sharp estimates in terms of these parameters. We also show that, in contrast to the Laplacian matrix on a combinatorial graph, no bound depending only on the diameter is possible. 
As a special case of our results on metric graphs, we deduce estimates for the normalised Laplacian matrix on combinatorial graphs which, surprisingly, are sometimes sharper than the ones obtained by purely combinatorial methods in the graph theoretical literature.
\end{abstract}

\maketitle

\section{Introduction}
\label{sec:intro}

A classical question in spectral theory consists in determining for which bodies --  among all those with prescribed volume, or surface measure, or perhaps another relevant geometric quantity -- a given combination of eigenvalues of the associated Laplacian, say with Dirichlet or Neumann boundary conditions, is maximised or minimised. In the case of domains, this goes back as far as 1870 to a now-famous conjecture of Lord Rayleigh, answered in the affirmative by G.\ Faber in 1923 in the planar case and E.\ Krahn in 1926 in the general case, that the first eigenvalue $\lambda_1$ of the Laplacian on $\Omega \subset \R^d$ with Dirichlet boundary conditions is always at least as large as that of a ball in $\R^d$ with the same volume, with equality being attained if and only if $\Omega$ is in fact a $d$-dimensional ball. This result was arguably the starting point of spectral geometry. Analogous results for the first positive eigenvalue $\lambda_1$ of the Laplacian with Neumann boundary conditions were proved by G.\ Szeg\H{o} and H.F.\ Weinberger in 1954 and 1956, respectively: in the Neumann case, it turns out that the first (non-trivial) eigenvalue is \emph{maximal} if $\Omega$ is a $d$-dimensional ball. We refer to~\cite{Hen06} for a survey of such \emph{isoperimetric inequalities} for differential operators.

The importance of the first positive eigenvalue cannot be overstated. For example, in the theory of parabolic equations $\lambda_1$ gives the speed of convergence of the system towards equilibrium. In mathematical physics it is the energy level associated with the ground state of the system, or the first excited state, if Neumann conditions are imposed; in the latter case $\lambda_1$ is therefore often referred to as the \emph{spectral gap}.

In recent years it has become increasingly clear that there exist parallel differential geometrical theories in the continuous setting of manifolds and in the discrete setting of graphs, see e.g.\ the survey~\cite{Kel15}. In the case of graphs, however, there are several competing notions which can be considered as generalisations of the Laplacian, including the \emph{discrete Laplacian} $\mathcal L$ and the \emph{normalised Laplacian} $\mathcal L_{\rm norm}$; moreover, it is not quite indisputable which geometric quantities should be chosen in order to impose meaningful restrictions on the class of sets under consideration. For instance, it was already proved by M.\ Fiedler in~\cite{Fie73} that among all connected graphs on a given number of vertices, the first positive eigenvalue of the discrete Laplacian $\mathcal L$ is maximal (resp., minimal) in the case of the complete (resp., path) graph. A more delicate analysis is needed to discuss the cases of graphs whose number of vertices \emph{and} edges, or else whose number of vertices of degree one,  is prescribed, see~\cite{LalPatSah11,BiyLey12}. Comparable results are known for the normalised Laplacian $\mathcal L_{\rm norm}$, cf.\ Section~\ref{sec:discrete}.

In the present paper we are going to focus on quantum graphs: roughly speaking, a (compact) \emph{quantum graph} is a (finite, connected) graph, each of whose edges $\me$ is identified with an interval of $\mathbb R$ of (finite) length $|\me|$. Then, the usual Euclidean distance on each edge induces in a natural way a metric space structure on a quantum graph -- we refer the reader to the monographs~\cite{BerKuc13,Mug14,Kur15} for more details. It is thus possible to define on each such interval a differential operator (which plays the role of a Hamiltonian in the framework of quantum mechanics on graphs): here we will only focus on the case of (one-dimensional) Laplacians. Gluing all these operators together by means of suitable boundary conditions yields a new Laplacian-type operator, the subject of our investigations.

It is mathematical folklore that quantum graph Laplacians ``interpolate'' between Laplace--Beltrami operators on compact manifolds and normalised Laplacians on combinatorial graphs; indeed, interesting relations have been proved using these interplays. It is perhaps surprising that very few results are known in the area of spectral geometry for quantum graphs: possibly the only Faber--Krahn-type result for quantum graphs says that the lowest non-trivial eigenvalue $\lambda_1$, i.e.\ the spectral gap, of the Neumann (i.e.\ Kirchhoff) Laplacian on a quantum graph is {\em minimised} among all graphs of given total length by the path, cf.~\cite{Fri05,KurNab14,Nic87}  (not maximised, as in the case of domains).

Our principal aim here is to undertake a more systematic investigation of universal eigenvalue inequalities for the quantum graph Laplacian with natural conditions at the vertices. Since we wish to gain a sense for which problems are (mathemtically) ``natural'' or ``sensible'', thereby also laying the foundations for future work, we will restrict ourselves to the prototype problem of upper and lower estimates on the spectral gap and to what we consider to be the four most natural quantities (see Section~\ref{sec:basic} below for precise definitions): 
\begin{itemize}
\item the total length $L$ of a graph, 
\item its diameter $D$, 
\item the number $V$ of its vertices and
\item the number $E$ of its edges.
\end{itemize}

In fact, even in the most elementary cases it turns out to be a surprisingly subtle question as to which problems are well posed; as we shall attempt to show below, quantum graphs can in fact display types of behaviour that are in a sense more complex than those of manifolds and combinatorial graphs. For up-to-date and fairly comprehensive overviews of the currently known estimates for the spectral gap of the discrete and normalised Laplacians of a combinatorial graph, we refer to~\cite{Mol12} and in~\cite{Chu97}, respectively.

If all edges of a quantum graph have the same length, i.e.\ if the graph is equilateral, then J.\ von Below showed in~\cite[Theorem, page 320]{Bel85} that all spectral problems concerning the Laplacian can be equivalently reduced to corresponding spectral problems for the normalised Laplacian $\mathcal L_{\rm norm}$ on the underlying combinatorial graph (see Section~\ref{sec:discrete} for a brief definition). In particular, the lowest non-zero eigenvalue $\lambda_1$ of the quantum graph Laplacian agrees with
\begin{equation}\label{eq:below}
\lambda_1 = \arccos^2 (1-\alpha_1)\qquad \hbox{provided }\alpha_1\in [0,2)\ ,
\end{equation}
where $\alpha_1$ is the lowest non-zero eigenvalue of $\mathcal L_{\rm norm}$, cf.~\cite[Fig.~1]{BelMug13}. And indeed abundant information is available on the spectrum of $\mathcal L_{\rm norm}$, see e.g.~\cite{Chu97,ButChu13}. However, the setting of~\cite{Bel85} is a very special case of general quantum graphs: the topic of this paper will be the far more challenging case of quantum graphs with different edge lengths. One may argue that investigating the spectrum of a differential operator is less convenient than working with a matrix. In fact, we maintain that our approach based on quantum graphs has some advantages that come from the flexibility offered by the continuous setting.

On the other hand, spectral geometry on quantum graphs is markedly different from on domains or manifolds, since elementary variational principles become far more powerful in an essentially one-dimensional setting: here, one can perform various types of ``surgery'' on graphs which have a given effect on the spectral gap. Our analysis will typically be based on nothing more than an -- at times rather subtle -- application of these principles, together with an explicit analysis of the resulting class of extremising graphs to identify the overall maximiser or minimiser. It is thus all the more surprising that so little seems to be known; in fact, one of the messages of the current paper is that one can go much further, and by more elementary means, than on domains or manifolds.

Quantum graphs have already been occasionally used in the past as a tool for spectral investigations of manifolds, cf.~\cite{Col86}; the main goal of this paper is to start, however, a systematic investigation of spectral geometry of quantum graphs. In Sections~\ref{sec:basic} and~\ref{sec:ex} we summarise the elementary properties of quantum graphs which we will need in the sequel: in particular, in Section~\ref{sec:basic}, we state the fundamental variational principles we will use, which show how the spectral gap depends on structural properties of the graph (see Lemma~\ref{lem:principles}); most of these have already appeared scattered throughout the literature, albeit not in one place, and they do not seem to have been used previously to study extremising problems. In Section~\ref{sec:ex} we list a number of classes of graphs with natural extremising properties. Section~\ref{sec:lev} is devoted to proving an upper bound on the spectral gap in terms of $L,E$ that complements the known lower bound~\eqref{eq:l-lower} -- the old result of S.\ Nicaise alluded to above that has been rediscovered several time since~\cite{Nic87}. In Section~\ref{sec:diam} we will prove that fixing the diameter $D$ of a graph alone is not enough to yield estimates on the spectral gap. This is in our opinion the most surprising result of this paper. In particular, in order to show that $D$ alone cannot bound the spectral gap from above we will introduce a special class of graphs, so-called \emph{pumpkin chains}, see Definition~\ref{def:hubgraph}. They have a large spectral gap for given diameter and will allow us to reduce our problem to a Sturm--Liouville one. While one-dimensional reductions have been used for the spectral analysis of (rather particular types of) graphs in the past \cite{HisPos09,Sol04}, our approach seems somewhat different: we do not need to make any symmetry assumptions on our graphs, and we obtain our results by studying sequences of Sturm--Liouville operators with smooth coefficients, that is, we are lead to natural, intrinsically one-dimensional phenomena. This method is also quite possibly amenable to further development. These pumpkin chains will also allow us to prove various upper bounds in conjunction with other quantities such as $V$ or $L$ in Sections~\ref{sec:diam-pos} and \ref{sec:diam-length}. We will briefly summarise the bounds for the normalised Laplacian which can be deduced from our results in Section~\ref{sec:discrete}.

While our graphs will always have finitely many edges and vertices, in some cases we are going to prove bounds that cannot be attained by such finite graphs, but are approximated by suitable families of finite graphs with increasingly many edges.

\medskip
We can represent the outcome of our investigations in a schematic form; two tables in Section~\ref{sec:concl} summarise the corresponding bounds and whether optimal graphs realising the bounds exist:
\medskip
\begin{center}
\begin{tabular}{r|l|l}
		\backslashbox{parameter(s)}{$\lambda_1$} & upper estimate? & lower estimate?  \\ \hline
$V$, $E$ &   \xmark~(Remark~\ref{rem:cannot}) & \xmark~(Remark~\ref{rem:cannot})\\ \hline
$L$ & \xmark~(Eq.~\eqref{eq:l-lower}) & \cmark~(Eq.~\eqref{lambda1flower}) \\ \hline
$L$, $V$ &\xmark~(Eq.~\eqref{lambda1flower}) & \cmark~(Eq.~\eqref{eq:l-lower}, Example~\ref{ex:lev-lower})   \\ \hline
$L$, $E$ & \cmark~(Theorem~\ref{th:le}) & \cmark~(Eq.~\eqref{eq:l-lower}, Example~\ref{ex:lev-lower}) \\ \hline
$D$ & \xmark~(Theorem~\ref{th:d-upper}) & \xmark~(Example~\ref{ex:d-lower}) \\ \hline
$D$, $V$  & \cmark~(Theorem~\ref{th:dv}) & \xmark~(Example~\ref{ex:d-lower}) \\ \hline
$D$, $E$  & \cmark~(Remark~\ref{rem:de}) & \cmark~(Remark~\ref{rem:de}) \\ \hline
$D$, $L$ &  	\cmark~(Theorem~\ref{th:dl-upper}) & \cmark~(Theorem~\ref{th:dl-lower}) 
			\end{tabular}
	\label{tab:resume-2}
\end{center}

\section{Notation and basic techniques} \label{sec:basic}

Throughout this paper all graphs are metric graphs, unless otherwise stated. We shall mostly adopt the usual notation of graph theory: graphs will be denoted by $\mG$, their edge and vertex sets will be denoted by $\mE$ and $\mV$, respectively, then again we will adopt the notation $\me$ and $\mv$ for edges and vertices, respectively. In graph theory it is customary to denote by $n$ (resp., $m$) the cardinality of $\mV$ (resp., $\mE$), but in this case we prefer to adopt the alternative notation
\[
V(\mG):=|\mV|  \qquad \hbox{and}\qquad E(\mG):=|\mE|,
\] 
since we wish to perform analysis on graphs. We will denote by 
\[
L(\mG) := \sum_{\me\in\mE}|\me|
\]
the total length of $\mG$, i.e., the sum of the lengths of all edges of $\mG$, and the diameter of $\mG$ by
\begin{equation}
\label{eq:d}
	D(\mG):=\sup\left\{\dist\,(x,y):x,y\in\mG\right\},
\end{equation}
where the distance between two points of a graph is as usual defined to be the length of the shortest path within $\mG$ connecting them (cf.~\cite[\S~3.2]{Mug14}). Note that here we take the supremum over \emph{all} points $x,y\in \mG$. If we restrict ourselves to considering $x,y\in\mV$, then we have a ``weaker'' notion of diameter, which we shall call the \emph{combinatorial diameter} $D_\mV$ of $\mG$, i.e.
\begin{equation}
\label{eq:comb-dv}
	D_\mV(\mG):=\sup\left\{\dist\,(x,y):x,y\in V(\mG)\right\}.
\end{equation}
This is consistent with the classical notion of diameter from combinatorial graph theory, and is in practice easier to compute if the graph is particularly large. Obviously, we have $D_\mV(\mG) \leq D(\mG)$, while if the longest edge in $\mG$ is of length $a>0$, say, then since the maximum distance of any point of $\mG$ to $\mV$ is $a/2$, we have $D(\mG) \leq D_\mV(\mG)+a$. See also Remark~\ref{rem:vertex-d}.

We will also impose the following standing assumption on all the graphs we consider to ensure simultaneously the non-triviality and the finiteness of all the quantities we consider.
\begin{assumption}
\label{ass:graph}
The metric graph $\mG$ is connected. It is compact and finite, i.e., it consists of finitely many edges of finite length.
\end{assumption}

The object of our investigations is a realisation of the operator that acts as a second derivative on the intervals associated with each edge of $\mG$. The most common choice in the literature is to define a Laplacian $\Delta$ on a quantum graph by imposing two conditions in each vertex of $\mG$: functions in the domain of $\Delta$
\begin{itemize} 
\item are continuous across the vertices and
\item their normal derivatives about each vertex sum up to 0.
\end{itemize}
Under Assumption~\ref{ass:graph} and with this choice of ``natural'' (i.e.~continuity and Kirchhoff) boundary conditions, $-\Delta$ is a self-adjoint, positive semi-definite operator with compact resolvent on the Hilbert space $L^2(\mG)$ of square integrable functions supported on the intervals associated with the edges of the graph. Thus, $-\Delta$ has pure point spectrum $\sigma(-\Delta)\subset [0,\infty)$; we will call its elements simply the eigenvalues of $\mG$, since the operator is uniquely determined by the metric graph $ \mG$. 

The value $ \lambda_0 = 0 $ is an eigenvalue of multiplicity one (since $\mG$ is connected) with eigenfunction $ u_0 \equiv 1. $ Thus the spectral gap coincides with the lowest non-trivial eigenvalue $ \lambda_1 > 0, $
which can be obtained by minimising the Rayleigh quotient subject to the constraint of $L^2$-orthogonality to the eigensubspace corresponding to $\lambda_0 =0$:
\begin{equation}
\label{eq:rq}
	\lambda_1(\mG) = \inf\left\{ \frac{\int_\mG |u'(x)|^2\,\textrm{d}x}{\int_\mG |u(x)|^2\,\textrm{d}x} :
	u \in H^1(\mG),\, {\int_\mG u(x)\,\textrm{d}x}=0 \right\}\ .
\end{equation}
Here $H^1(\mG)$ -- the form domain of $-\Delta$ -- is the space consisting of those functions defined on the intervals associated with the edges of the graph, belonging to the first Sobolev space $H^1$ thereon, and satisfying continuity conditions in the vertices, cf.~\cite[\S~1.3]{BerKuc13} or~\cite[\S~3.2]{Mug14}. 

\begin{remark}
(a) Unlike in combinatorial graph theory, for the purposes of quantum graph theory vertices of degree two are unessential objects that can be inserted or removed without changing either the space $H^1(\mG)$ or the domain of $\Delta$, and in particular without affecting the spectrum.

(b) In general, we are not going to assume graphs to be simple. Indeed, for our purposes it is always possible to add dummy vertices in the middle of an edge $\me$ -- thus replacing an edge of length $|\me|$ by two edges of length $|\tilde{\me}|$ and $|\me|-|\tilde{\me}|$, respectively -- turning a non-simple graph into a simple one without changing the spectrum of the Laplacian.
\end{remark}

From \eqref{eq:rq} follow a handful of elementary but powerful principles of which we will make extensive use; we note that many of these have already appeared in \cite{KurMalNab13}, see also \cite{BerKuc12,ExnJex12} for other results on edge dependence of eigenvalues and eigenfunctions of a slightly different flavour.
\begin{lemma}
\label{lem:principles}
Suppose $\mG$ and $\mG'$ are quantum graphs satisfying Assumption~\ref{ass:graph}.
\begin{enumerate}
\item If $\mG'$ is formed by attaching a pendant edge, or more generally a pendant graph, to one vertex of $\mG$, then $\lambda_1(\mG) \geq \lambda_1(\mG')$.
\item If $\mG'$ is formed from $\mG$ by identifying two vertices of $\mG$ (say, $\mv_1,\mv_2$ are replaced with a new vertex $\mv_0$ and each edge having $\mv_1$ or $\mv_2$ as an endpoint is replaced with a new edge having $\mv_0$ as an endpoint, and in particular the edges between $\mv_1$ and $\mv_2$ are replaced with loops around $\mv_0$), then $\lambda_1(\mG)\leq \lambda_1(\mG')$.
\item If we add an edge $\me=\mv_1\mv_2$ between two already existing vertices of a quantum graph $\mG$, producing the new quantum graph $\mG'$, then $\lambda_1(\mG)\ge  \lambda_1(\mG')$ \emph{provided} 
there is an eigenfunction corresponding to 
$\lambda_1(\mG)$ attaining the same value on both $\mv_1,\mv_2$.
\item If $\mG'$ is formed from $\mG$ by lengthening a given edge, then $\lambda_1(\mG) \geq \lambda_1(\mG')$.
\item If $ \mG' $ is obtained from $ \mG $ by scaling each edge with the factor $ 1/c \in \mathbb R $, then the corresponding eigenvalues scale as $ c^2 $, that is,
$$ \lambda_1 (\mG) = c^{-2} \lambda_1 (\mG'). $$
\end{enumerate}
\end{lemma}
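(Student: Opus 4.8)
The plan is to derive all five assertions directly from the variational characterisation \eqref{eq:rq}. For (1), (3) and (4) the strategy is identical: fix an eigenfunction $u$ for $\lambda_1(\mG)$ — for (3) the one supplied by the hypothesis, satisfying $u(\mv_1)=u(\mv_2)$; for (1) and (4) any one, which exists since $-\Delta$ has compact resolvent — so that $\int_\mG u=0$ and $\int_\mG|u'|^2=\lambda_1(\mG)\int_\mG|u|^2$; then extend $u$ to an admissible competitor $\tilde u$ on $\mG'$, correct its mean, and estimate the Rayleigh quotient for $\mG'$ from above. Assertions (2) and (5) are instead handled by exhibiting an explicit correspondence between the two form domains.

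For (1) and (3) I would extend $u$ to $\tilde u$ on $\mG'$ by declaring it constant on the newly added part: equal to $u(\mv)$ on the entire attached pendant graph in (1), and equal to $a:=u(\mv_1)=u(\mv_2)$ on the new edge $\me=\mv_1\mv_2$ in (3) — here the equal-value hypothesis in (3) is precisely what makes this constant extension continuous at both endpoints, hence $\tilde u\in H^1(\mG')$. By construction $\int_{\mG'}|\tilde u'|^2=\int_\mG|u'|^2$ while $\int_{\mG'}|\tilde u|^2\ge\int_\mG|u|^2$. To meet the constraint I pass to $\tilde u-c$ with $c:=L(\mG')^{-1}\int_{\mG'}\tilde u$; the numerator is unaffected, and the denominator does not drop below $\int_\mG|u|^2$, because
\[
\int_{\mG'}|\tilde u-c|^2\;\ge\;\int_{\mG}|u-c|^2\;\ge\;\int_{\mG}|u|^2,
\]
the first inequality since the extra (constant) piece contributes a nonnegative amount, the second since $u$ has zero mean over $\mG$. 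Feeding $\tilde u-c$ into \eqref{eq:rq} for $\mG'$ gives $\lambda_1(\mG')\le\frac{\int_\mG|u'|^2}{\int_\mG|u|^2}=\lambda_1(\mG)$.

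Part (4) follows the same template, the only change being that on the edge being lengthened — say $\me$ of length $\ell$ becomes $\me'$ of length $\ell'>\ell$ — the extension is the affine reparametrisation $\tilde u(x):=u(\ell x/\ell')$ rather than a constant; the substitution $y=\ell x/\ell'$ multiplies the Dirichlet integral over that edge by $\ell/\ell'<1$ and the $L^2$ integral (of $\tilde u$, and equally of $\tilde u-a$ for any constant $a$) by $\ell'/\ell>1$, so once more the numerator goes down, the denominator up, and the mean correction is dealt with exactly as in (1). By the preceding remark, degree-two vertices may be freely inserted or deleted, so there is no need to separate the pendant and interior cases. For (2), pulling a function $u\in H^1(\mG')$ back along the identification map yields a function on $\mG$ that lies in $H^1(\mG)$ and takes equal values at $\mv_1,\mv_2$; this pull-back preserves the $L^2$- and $H^1$-norms and, since $L(\mG)=L(\mG')$, the zero-mean constraint as well, so the admissible set for $\mG'$ is realised as a subset of that for $\mG$, and the infimum in \eqref{eq:rq} over the smaller set gives $\lambda_1(\mG')\ge\lambda_1(\mG)$. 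For (5), the edgewise reparametrisation $\tilde u(x):=u(cx)$ is a bijection of $H^1(\mG)$ onto $H^1(\mG')$ under which $\int_{\mG'}|\tilde u'|^2=c\int_\mG|u'|^2$, $\int_{\mG'}|\tilde u|^2=c^{-1}\int_\mG|u|^2$ and $\int_{\mG'}\tilde u=c^{-1}\int_\mG u$; hence the constraint is preserved and every Rayleigh quotient is multiplied by $c^2$, whence $\lambda_1(\mG')=c^2\lambda_1(\mG)$.

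There is no deep obstacle here — all five are elementary consequences of \eqref{eq:rq} — but the one point that genuinely needs care is the mean-zero correction in (1), (3) and (4): the constraint forces us to subtract the mean of the \emph{extended} function over the \emph{larger} graph, so one must verify that this does not shrink the denominator below $\int_\mG|u|^2$, which works exactly because $u$ was chosen with zero mean over $\mG$. A secondary point worth flagging is that in (3) the equal-value hypothesis is not a technicality: without it the natural competitor (a constant extension along the new edge) would violate the continuity condition at a vertex, and the conclusion can indeed fail.
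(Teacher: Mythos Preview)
Your proof is correct and follows the paper's approach in (1), (2), (3) and (5); in fact your treatment of the mean-zero correction in (1) and (3) is more explicit than the paper's sketch, which merely asserts that ``it may be checked'' that the Rayleigh quotient does not increase. The one genuine difference is in (4): the paper extends the eigenfunction by the \emph{constant} value $\psi_1(a)$ on the extra segment $(a,a')$ of the lengthened edge (so that (4) becomes essentially a special case of (1) or (3)), whereas you instead \emph{stretch} the eigenfunction affinely over the whole lengthened edge via $\tilde u(x)=u(\ell x/\ell')$. Both work: the paper's constant extension keeps the Dirichlet integral exactly equal and makes the extra $L^2$-mass obvious, while your reparametrisation strictly decreases the edge's Dirichlet contribution by the factor $\ell/\ell'$ and increases its $L^2$-contribution (of $\tilde u-a$ for any $a$) by $\ell'/\ell$, which is what allows the mean-correction argument to go through verbatim. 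Neither approach is deeper than the other; the paper's has the mild advantage of unifying (1), (3), (4) under a single ``extend by a constant and orthogonalise'' template, while yours has the mild advantage of giving a strict inequality on the Dirichlet side whenever $u'$ is not identically zero on the lengthened edge.
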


By ``pendant graph'' (or edge) we mean that the graph to be added, i.e. $\mG' \setminus \mG$, is attached to $\mG$ only at the one vertex, sometimes called a ``cutvertex'' in the graph theoretical literature. In particular, this covers the case of adding a loop.

\begin{proof}[Proof of Lemma~\ref{lem:principles}] We will merely sketch a proof of the statements; details can in most cases be found in \cite{KurMalNab13}.

\noindent (1) Let $\psi$ be any eigenfunction associated with $\lambda_1(\mG)$, and suppose that we obtain $ \mG'$ by attaching a pendant graph to a vertex $ \mv \in \mG.$
 We extend $\psi$ to a function $\tilde\psi\in H^1(\mG')$ by setting $\tilde\psi=\psi(\mv)$ on $\mG' \setminus \mG $. Then the function $\varphi = \tilde\psi - \int_{\mG' \setminus \mG}\tilde\psi$ is a valid test function for $\lambda_1(\mG')$, and it may be checked that the Rayleigh quotient of $\varphi$ is not larger than $\lambda_1(\mG)$. 

\noindent (2)  This follows immediately from the fact that $H^1(\mG')$ may be identified with a subspace of $H^1(\mG)$, since the continuity conditions imposed on functions in the former space are more restrictive; but the Rayleigh quotient is given by the same formula.

\noindent (3) The proof is similar to (1): extending the eigenfunction $ \psi_1 $ minimising the Rayleigh quotient for $ \mG $ by a constant on the new edge (equal to the common value $ \psi_1 (\mv_1) =
\psi_1 (\mv_2) $) yields a trial function for $ \mG'$ whose Rayleigh quotient can be no larger, even after a possible orthogonalisation. 

\noindent (4) Let us denote by $ \me_1 \subset \mG $ the edge to be lengthened, which we identify with the interval $(0,a)$, identify the lengthened interval with $(0,a')$, $a'>a$, and let $ \psi_1 $ be the eigenfunction on $ \mG$. Consider the function $ \tilde\psi \in H^1(\mG')$ which is equal to $ \psi_1 $ on $\mG \setminus \me_1$ (identified canonically with a subset of $\mG'$) and on $(0,a)$, and which is extended by $\psi_1(a)$ on $(a,a')$. As in (1), the function $\varphi = \tilde\psi - \int_{\mG'} \tilde \psi = \tilde \psi - (a'-a)\psi(a)$ is a valid test function for $\lambda_1 (\mG')$, and as in (1), its Rayleigh quotient is not larger than $\lambda_1 (\mG)$. Alternatively, this may be viewed as a special case of (3).

\noindent (5) The proof follows essentially (4). One needs to take into account that all edges are scaled with the same factor and no orthogonalisation is needed.
\end{proof}

\begin{remark}\label{rem:cannot}
(a) Statement (3) can be interpreted as saying that diffusion processes in quantum graphs may in some cases actually converge to equilibrium faster upon \emph{removing} edges. This seeming paradox seems to have been first explicitly remarked on  in~\cite{KurMalNab13} and may be solved by realising that the edges whose removal increases the spectral gap are in some way redundant.

(b) Statement (2) says that ``pulling apart'' any vertex of a graph (to create a new graph with the same set of edges and lower connectivity) always lowers the spectral gap. This process can be continued until a quantum graph of minimal connectedness -- a tree -- is reached. This seems reminiscent of the principle that each spanning tree of a combinatorial graph $\mG$ has lower spectral gap (for the discrete Laplacian) than $\mG$ itself, cf.~\cite[Cor.~3.4]{Moh91}. Obviously, the metric trees associated in this way with $\mG$ are formed from the same set of edges as $\mG$, while the (discrete) spanning trees live on the same set of vertices as $\mG$. It is an interesting question, but one we will not investigate here, to what extent these metric trees really are the metric equivalent of ``spanning trees''.

(c) On combinatorial graphs $\mG$, there is a wide range of upper and lower bounds available which depend only on intrinsically combinatorial quantities such as $V$, the minimal degree $\deg_{\min{}}(\mG)$, the maximal degree $\deg_{\max{}}(\mG)$ and the edge connectivity $e(\mG)$ of $\mG$. We mention for example that the second lowest eigenvalue $\beta_1$ of the discrete Laplacian $\mathcal L$ on a combinatorial graph satisfies the bounds 
\[
2e(\mG)\left(1-\cos\frac{\pi}{V}\right) \leq \beta_1 \leq \frac{V}{V-1}\deg_{\min{}}(\mG)\ ,
\]
cf.~\cite{Fie73}, whence the bounds
\begin{equation}\label{eq:spielman-revis}
\frac{2e(\mG)}{\deg_{\max{}}(\mG)} \left(1-\cos\frac{\pi}{V}\right) \leq \alpha_1 \leq \frac{V}{V-1}\ ,
\end{equation}
on the second lowest eigenvalue $\alpha_1$ of the normalised Laplacian $\mathcal L_{\rm norm}$; if $\mG$ is planar, then the upper bounds may be improved to
\[
\beta_1\le \frac{8}{V}\deg_{\max{}}(\mG)\qquad \hbox{and}\qquad \alpha_1\le \frac{8}{V}\frac{\deg_{\max{}}(\mG)}{\deg_{\min{}}(\mG)}\ ,
\]
cf.~\cite{SpiTen07} (see also Section~\ref{sec:discrete} for definitions and references to further bounds). However, Statement (5) implies the obvious fact that spectral estimates for the continuous Laplacian cannot be obtained without taking into account any metric parameter like the total length or the length of one of the edges. In particular estimates based solely on the number of vertices and/or edges are impossible. This also confirms that existing estimates for combinatorial Laplacians cannot be directly applied in our case, since combinatorial graphs lack metric parameters.

(d) It is natural to ask whether the statements also hold for the higher eigenvalues. It is obvious (and trivial to adapt the proofs to show) that (2) and (5) hold for all $\lambda_n$, $n \in \N$. For the others this is far less clear; (1) and (4) seem intuitively obvious, but the natural proof for $\lambda_1$ does not seem to generalise easily owing to the more complicated nature of the orthogonality conditions that test functions for the higher eigenvalues need to satisfy. We do not expect (3) to hold in general, although we do not have a counterexample. However, as this is not relevant for the analysis at hand, we will not pursue it further here.
\end{remark}

\section{Extremal graphs and special classes of graphs} \label{sec:ex}

In the process of studying optimisation problems, it is natural to look for classes of potential optimisers, and the following special classes of graphs seem at various times to play an important role.
In order to obtain effective estimates it is important to be able to calculate the spectrum of an optimiser explicitly, which is possible only for graphs with very special choice of the edge lengths.
If all edges of a quantum graph have the same length, then we call it \emph{equilateral}: unless otherwise stated, we will always assume flower, star, pumpkin, and complete graphs to be equilateral.

\begin{itemize}

\item \emph{Path} graphs $ \mI(L)$ are intervals of length $L$ viewed as quantum graphs.
Because on a path graph the imposed boundary conditions boil down to continuity of a function and its derivative on the internal vertices and Neumann on the extremal ones, 
the spectral gap of a path graph of total length $L$ clearly agrees with that of an interval of length $L$, i.e.,
\begin{equation}\label{lambda1path}
\lambda_1 (\mI(L)) =\frac{\pi^2}{L^2}\ .
\end{equation}

\item \emph{Flower} graphs $ \mF (L, E) $ are quantum graphs built upon non-simple combinatorial graphs consisting of one vertex and $E$ loops attached to it, which we shall call \emph{petals}.
\begin{figure}
\begin{tikzpicture}[scale=0.7]
  \begin{polaraxis}[grid=none, axis lines=none]
     \addplot[mark=none,domain=0:360,samples=300] { abs(cos(7*x/2))};
   \end{polaraxis}
 \end{tikzpicture}
\caption{A flower graph $\mF(L,7)$ on seven edges (petals).}
\end{figure}
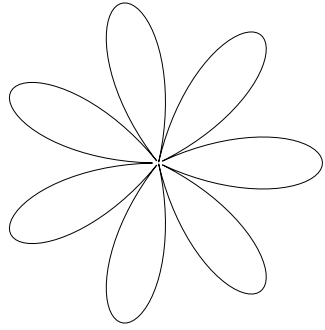
A direct computation shows that the spectral gap of a flower graph of total length $L$ on $E \geq 2$ edges is
\begin{equation}\label{lambda1flower}
\lambda_1 (\mF (L, E)) =\frac{\pi^2 E^2}{L^2}\ ;
\end{equation}
if $E=1$, then we have a loop of length $L$, which has $\lambda_1 = 4\pi^2/L^2$.

\item \emph{Star} graphs $ \mS (L,E) $ are quantum graphs consisting of a central vertex $\mv$ and $E \geq 2$ edges radiating out from $\mv$. If we take all edges to have equal length $L/E$ (as we shall always do), then
\begin{equation}\label{lambda1star}
	\lambda_1(\mS (L,E)) = \frac{\pi^2 E^2}{4L^2}.
\end{equation}

\item \emph{Pumpkin} graphs $ \mP(L,E) $, sometimes also called \emph{dipole} or \emph{banana} graphs in the literature, are quantum graphs built upon non-simple combinatorial graphs consisting of two vertices and $E$ parallel edges, which we shall call \emph{slices}, having both vertices as endpoints. The spectral gap of a pumpkin graph of total length $L$ on $E$ edges is also
\begin{equation}\label{lambda1pumpkin}
\lambda_1 (\mP (L, E)) =\frac{\pi^2 E^2}{L^2}\ ,
\end{equation}
since one can see directly that there is a corresponding eigenfunction having the form $\sin(\frac{\pi E\cdot}{L})$ on each edge, where we identify each edge with the interval $(0,\frac{L}{E})$.
\begin{figure}
\begin{tikzpicture}[scale=0.8]
\coordinate (A) at (0,0);
\coordinate (D) at (6,0);
\draw (A) -- (D);
\draw (A) to [bend left] (D);
\draw (A) to [bend left=60] (D);
\draw (A) to [bend left=110] (D);
\draw (A) to [bend right] (D);
\draw (A) to [bend right=60] (D);
\draw (A) to [bend right=110] (D);
\end{tikzpicture}
\caption{A pumpkin graph $\mP(L,7)$ on seven edges (slices).}
\end{figure}
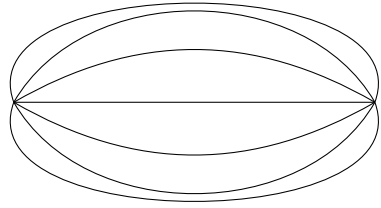

\item \emph{Complete} graphs $ \mK_{V}$ are quantum graphs built upon simple combinatorial graphs consisting of $V$ vertices and exactly one edge joining any pair of vertices, meaning $E=\frac{V(V-1)}{2}$ edges in total. By~\eqref{eq:below} the spectral gap of a complete graph of total length $L$ on $E$ edges is
\begin{equation}\label{lambda1complete}
\lambda_1 (\mK_V) =\left(\arccos\frac{1}{1-V} \right)^2 \frac{V^2 (V-1)^2}{4 L^2}\ .
\end{equation}
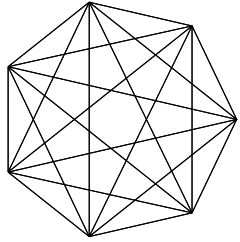
\begin{figure}
\begin{tikzpicture}[scale=0.8]
\foreach \x in {51, 103, 154, 206, 257, 309, 360}{
\foreach \y in {51, 103, 154, 206, 257, 309, 360}{
\draw (\x:2cm) -- (\y:2cm);
}}
\end{tikzpicture}
\caption{A complete graph $ \mK_7 $ on seven vertices.}
\end{figure}
\end{itemize}

\begin{remark}
The expressions in~\eqref{lambda1flower} and~\eqref{lambda1pumpkin} imply in particular that adding to an (equilateral) flower or pumpkin graph arbitrarily many petals or slices (in the case of flowers and pumpkins, respectively) does not change the spectral gap,  instead increasing the multiplicity of $\lambda_1$, as long as each of the new edges has the same length as the original ones. 
\end{remark}

In several cases we will also perform graph surgery to produce examples. A distinguished class consists of what we call \emph{dumbbell graphs}, which are obtained by joining two quantum graphs (ideally, two ``massive ones'') by only one edge. Naturally, dumbbell graphs are the analogue of dumbbell domains.

Remarkably, we are going to see that in most cases the correct quantum graph equivalents of balls are not complete graphs, as one may naively think (and -- as a rule of thumb -- as is actually the case when \emph{discrete} graphs are investigated~\cite{Fie73}), but rather flower graphs.

\section{Estimates involving the length} \label{sec:lev}

Perhaps the most natural quantity of a graph $\mG$ to consider is its total length $L$, which is the equivalent of the volume of a domain. However, quantum graphs lack a natural (continuous) notion of perimeter, and in some sense $V$ and $E$ might be thought of as discrete counterparts thereof; so it is also natural to consider $V$ and/or $E$ in combination with $L$. This section is thus devoted to estimates on the spectral gap in terms of the total length $ L $ and possibly $ V $ and $ E$; first we will consider lower bounds, and then upper bounds.

We remark at this juncture that a graph on $E$ edges can have at most $V=E+1$ vertices, so that bounding $E$ from above automatically also bounds $V$ from above; put differently, controlling $E$ is in a certain sense a stronger restriction on the graph than controlling $V$. Moreover, using standard matching conditions allows one to increase the number of vertices and edges without actually changing the metric graph (by introducing new vertices at inner points on the edges), so in general we expect controlling $E$ and $V$ from above to lead to both better upper \emph{and} lower bounds, since this is a restriction on the degree of complexity of the graph.

\subsection{Lower estimates}
 
The first and fundamental lower estimate was obtained by S.\ Nicaise~\cite[Théorème 3.1]{Nic87} and later independently re-proved by L.\ Friedlander \cite[Theorem~1]{Fri05} and by S.\ Naboko and one of the present authors \cite[Theorem~1]{KurNab14}:
\begin{equation}
\label{eq:l-lower}
\lambda_1(\mG) \geq \frac{\pi^2}{L^2}\ ,
\end{equation}
where $\mG$ is any graph having total length $L>0$. The minimiser is essentially unique: it is simply a path graph, which is a one-dimensional ball. This estimate is easy to understand intuitively: the diffusion in a quantum graphs of total length $ L $ can be at most as slow as on the interval $[0,L]$.

The lower estimate cannot be improved without taking into account new parameters, since we have a minimiser. However, neither $E$ nor $V$ can be used in combination with $L$ to give a refined lower bound on $\lambda_1(\mG)$ any better than \eqref{eq:l-lower}, as we show next. The idea behind this is that perturbing a graph in a small but rather complicated way does not affect the spectral gap in a serious way, although it can have an arbitrarily large effect on the graph's combinatorics.

\begin{example}
\label{ex:lev-lower}
Given any natural numbers $E \geq 2$ and $V \geq 2$ with $V \leq E + 1$ and any positive numbers $L>0$ and $\varepsilon>0$, we can find a connected graph $\mG$ having $E$ edges, $V$ vertices and total length $L$, such that
\begin{displaymath}
	\lambda_1(\mG) \leq \frac{\pi^2}{L^2} + \varepsilon\ .
\end{displaymath}
To do so, we take an arbitrary connected graph $\tilde\mG$ having $V-1$ vertices and $E-1$ edges (if $V\geq 3$ this is always possible to do by starting with a tree with $V-1$ vertices and $V-2 \leq E-1$ edges, and adding extra edges between vertices in an arbitrary fashion until there are $E-1$; if $V=2$, we take a flower with $E-1$ petals), rescale $\tilde\mG$ to have total length $\delta>0$ and form $\mG$ by taking a $V^{\rm th}$ vertex $\mv$ and joining $\mv$ to an arbitrary vertex $\mw$ of $\tilde\mG$ by an $E^{\rm th}$ edge $\me = \mv\mw$ of length $L-\delta$. It follows from Lemma~\ref{lem:principles}.(1) that $\lambda_1(\mG) \leq \lambda_1(\me) = \pi^2/(L-\delta)^2$ (``adding'' the graph $\tilde\mG$ to the edge $\me$ can only decrease the eigenvalue); choosing $\delta>0$ small enough proves the claim.
$\blacksquare$\end{example}

\subsection{Upper estimates.}

The explicit examples considered in Section~\ref{sec:ex} show that no upper estimate is possible in terms of $L$ alone: by~\eqref{lambda1flower}, $\lambda_1$ can be made arbitrarily large by considering flower graphs on more and more edges of smaller and smaller length. Moreover, the same example shows that fixing just $L$ and $V$ together is not enough to bound $\lambda_1$ from above.

However, if we fix $L$ and $E$, then we obtain a sharp upper bound on $\lambda_1$ in terms of the arithmetic mean value $ \mathcal A = L/E $ of the edge length.

\begin{theorem}
\label{th:le}
Let $\mG$ be a quantum graph having length $L>0$ and $E \geq 2$ edges. Then
\begin{equation} \label{upperest}
	\lambda_1(\mG)\leq \frac{\pi^2 E^2}{L^2} \equiv \frac{\pi^2}{\mathcal A^2},
\end{equation}
with equality if and only if $\mG$ is an equilateral pumpkin or flower graph.
 If $E=1$, then  
 $$ \lambda_1 (\mG) = \left\{
 \begin{array}{ll}
  4 \pi^2/L^2 & \mbox{if $ \mG $ is a loop,} \\
   \pi^2/L^2 & \mbox{if $ \mG $ is a path.}
   \end{array} \right. $$ 
\end{theorem}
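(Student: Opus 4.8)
The plan is to establish the upper bound $\lambda_1(\mG) \le \pi^2 E^2/L^2$ by a surgery argument that reduces any graph on $E$ edges to a pumpkin graph, and then to analyse the equality case separately. First I would invoke the operations of Lemma~\ref{lem:principles} to ``concentrate'' the graph. The key observation is that Lemma~\ref{lem:principles}.(2) lets us identify vertices without decreasing $\lambda_1$: starting from an arbitrary $\mG$ with vertex set $\mV$, I would pick any two vertices and identify them, then repeat, eventually collapsing all of $\mV$ down to (at most) two vertices. If we collapse everything to a single vertex we obtain a flower graph on $E$ petals of total length $L$, whose spectral gap is exactly $\pi^2 E^2/L^2$ by~\eqref{lambda1flower}; if we stop at two vertices, the result is a (generally non-equilateral) pumpkin-type graph on $E$ edges. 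In either case we have produced a graph $\mG''$ with $\lambda_1(\mG) \le \lambda_1(\mG'')$, so it suffices to bound the spectral gap of a pumpkin-type graph with $E$ edges of lengths $\ell_1,\dots,\ell_E$ summing to $L$.

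For such a generalised pumpkin, I would use the Rayleigh quotient~\eqref{eq:rq} directly with an explicit test function. On the edge of length $\ell_j$ (identified with $(0,\ell_j)$ with both endpoints glued to the two common vertices), take $u_j(x) = \cos(\pi x/\ell_j)$; this vanishes in mean over each edge, hence over $\mG''$, and it takes the same boundary value $\pm 1$ at the two vertices, so it is continuous across them and lies in $H^1(\mG'')$ — no Kirchhoff condition is needed for membership in the form domain. Its Rayleigh quotient is a weighted average: $\sum_j \int_0^{\ell_j} (\pi/\ell_j)^2 \sin^2 \ge$, over $\sum_j \int_0^{\ell_j}\cos^2$, which equals $\sum_j (\pi^2/\ell_j)\big/\sum_j \ell_j$. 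By the Cauchy–Schwarz (or power-mean) inequality, $\sum_j \ell_j^{-1} \ge E^2/\sum_j \ell_j = E^2/L$, so this quotient is at most $\pi^2 E^2/L^2$, as desired. Hence $\lambda_1(\mG) \le \lambda_1(\mG'') \le \pi^2 E^2/L^2$.

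For the equality case, I would argue that equality forces rigidity at every step. Equality in the power-mean bound above holds iff all $\ell_j$ are equal, so the pumpkin must be equilateral; combined with~\eqref{lambda1pumpkin} and~\eqref{lambda1flower}, equilateral pumpkins and flowers do attain the bound, giving the ``if'' direction. For the ``only if'' direction, suppose $\lambda_1(\mG) = \pi^2 E^2/L^2$ and $\mG$ is neither. One route: a graph on $E$ edges that is not a flower or an equilateral pumpkin either has a vertex of degree one (a pendant structure), in which case Lemma~\ref{lem:principles}.(1) can be applied nontrivially — but one must check the inequality there is strict — or it has more than two vertices of degree $\ge 3$, or unequal edge lengths, and in each case the collapsing procedure produces a strictly smaller target value; alternatively, one analyses the eigenfunction attaining $\lambda_1$ on $\mG$ directly and shows that the matching conditions at the vertices, together with the sharp value of $\lambda_1$, force each edge to carry a half-period of a sine with all edges the same length, i.e.\ force the flower/pumpkin structure. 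Finally, the $E=1$ statement is immediate: a connected graph on one edge is either a path $\mI(L)$, with $\lambda_1 = \pi^2/L^2$ by~\eqref{lambda1path}, or a loop of length $L$, with $\lambda_1 = 4\pi^2/L^2$ as noted after~\eqref{lambda1flower}.

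The main obstacle I anticipate is the equality analysis rather than the bound itself. The inequality chain is short and robust, but pinning down \emph{exactly} which graphs are extremal requires care: one must rule out, e.g., graphs with some degree-two vertices (harmless, but they technically change $E$), graphs with a pendant loop or path where strictness of Lemma~\ref{lem:principles}.(1) needs justification, and three-vertex configurations — and one must verify that the test function above is genuinely optimal for a non-equilateral pumpkin (i.e.\ that $\lambda_1$ of such a pumpkin is \emph{strictly} below $\pi^2 E^2/L^2$, not merely bounded by it), which may call for a separate argument via the structure of the true eigenfunction. I would likely handle this by characterising equality through the eigenfunction: equality in the Rayleigh quotient means the test function is an eigenfunction, and then the Kirchhoff condition at the vertices plus the forced shape of $u$ on each edge yields the classification.
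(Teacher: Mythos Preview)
Your argument has a genuine gap at the core step: the Cauchy--Schwarz (AM--HM) inequality you invoke points the wrong way. For your cosine test function on a non-equilateral pumpkin, the Rayleigh quotient computes to $\pi^2 L^{-1}\sum_j \ell_j^{-1}$, and AM--HM gives $\sum_j \ell_j^{-1} \ge E^2/L$, so this quotient is \emph{at least} $\pi^2 E^2/L^2$, not at most. The chain then reads $\lambda_1(\mG'') \le \pi^2 L^{-1}\sum_j \ell_j^{-1} \ge \pi^2 E^2/L^2$, which proves nothing. (Sanity check: with two edges of lengths $1$ and $3$, your quotient is $\pi^2/3$, while the claimed bound is $\pi^2/4$.) The flower branch has the same defect, plus an additional one: collapsing all vertices to one produces a flower with the \emph{original} (generally unequal) edge lengths, so \eqref{lambda1flower} does not apply; moreover your cosine test function is not even in $H^1$ of a flower, since each petal begins and ends at the single vertex with values $+1$ and $-1$.

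The paper's proof proceeds differently after the collapse to a (non-equilateral) flower $\tilde\mG$: it selects the two longest petals $\me_1,\me_2$, whose combined length satisfies $M \ge 2L/E$ by pigeonhole, and then \emph{removes} the remaining petals via Lemma~\ref{lem:principles}.(1), which can only raise $\lambda_1$. The resulting two-petal flower is a circle of length $M$ with one marked point, and a suitably rotated circle eigenfunction gives $\lambda_1 \le 4\pi^2/M^2 \le \pi^2 E^2/L^2$. The key idea you are missing is this second surgery step --- peeling off all but two petals --- which reduces to a graph whose $\lambda_1$ can be bounded \emph{explicitly}, rather than attempting to control a general non-equilateral flower by a single test function. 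The equality case then falls out: equality forces $M = 2L/E$, hence equilaterality, and a separate test-function argument rules out equilateral graphs with three or more vertices.
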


It is an interesting phenomenon that we have two fundamentally distinct classes of maximisers. In fact, our proof implicitly contains a second characterisation of the maximisers, which curiously allows us to obtain indirectly the following statement.

\begin{corollary}
\label{cor:max}
A finite equilateral quantum graph $\mG$ of length $L>0$ and $E \geq 3$ edges admits an eigenfunction corresponding to $\lambda_1(\mG)$ which takes on the value zero at all vertices of $\mG$ if and only if $\mG$ is an (equilateral) pumpkin or flower graph.
\end{corollary}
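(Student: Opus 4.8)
The plan is to read this off from Theorem~\ref{th:le}: the ``only if'' direction will follow because an eigenfunction vanishing at all vertices forces $\lambda_1(\mG)$ to be at least the bound $\pi^2E^2/L^2$, which by Theorem~\ref{th:le} can only happen for the extremal graphs; the ``if'' direction is a direct construction of the required eigenfunction on the pumpkin and on the flower.

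For the ``if'' direction I would simply write down the eigenfunctions. On an equilateral pumpkin $\mP(L,E)$, with each slice identified with $(0,L/E)$, the function equal to $\sin(\pi E x/L)$ on every slice is, as already noted below \eqref{lambda1pumpkin}, an eigenfunction for $\lambda_1=\pi^2E^2/L^2$, and it vanishes at both vertices. On an equilateral flower $\mF(L,E)$ with $E\ge 3$ (so at least two) petals, identify each petal with $(0,L/E)$ and take the function equal to $\sin(\pi E x/L)$ on one petal, to $-\sin(\pi E x/L)$ on a second petal, and to $0$ on all remaining petals. This function is continuous at the single vertex $\mv$ with value $0$, it solves $-u''=(\pi^2E^2/L^2)u$ edgewise, and the sum of its outgoing derivatives at $\mv$ vanishes, since the two nontrivial petals contribute $+2\pi E/L$ and $-2\pi E/L$ respectively and all other petals contribute $0$; as $\lambda_1(\mF(L,E))=\pi^2E^2/L^2$ by \eqref{lambda1flower}, this is a $\lambda_1$-eigenfunction that vanishes at every vertex.

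For the ``only if'' direction, suppose $\psi$ is an eigenfunction for $\lambda_1(\mG)$ vanishing at every vertex, and set $\ell:=L/E$, the common edge length. Since $\psi$ is nontrivial it is not identically zero on some edge $\me$; its restriction to $\me\cong(0,\ell)$ then solves $-\psi''=\lambda_1(\mG)\psi$ and vanishes at both endpoints of the interval, hence is a nonzero Dirichlet eigenfunction of $(0,\ell)$. Therefore $\lambda_1(\mG)=(k\pi/\ell)^2$ for some integer $k\ge 1$, so in particular $\lambda_1(\mG)\ge(\pi/\ell)^2=\pi^2E^2/L^2$. Combining this with the upper bound $\lambda_1(\mG)\le\pi^2E^2/L^2$ of Theorem~\ref{th:le} forces equality, and then the equality clause of Theorem~\ref{th:le} gives that $\mG$ is an equilateral pumpkin or flower graph.

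I do not expect any serious obstacle here: the whole statement is essentially a reformulation of the equality case of Theorem~\ref{th:le} in terms of the nodal behaviour of an extremal eigenfunction. The only point needing a little care is verifying in the ``if'' direction that the two-petal test function on the flower really is admissible and satisfies the Kirchhoff condition at the high-degree vertex -- and it is exactly here that one uses that there are at least two petals, i.e.\ $E\ge 3$ (in fact $E\ge 2$ would already do for this construction).
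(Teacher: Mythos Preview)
Your proof is correct, and in the ``only if'' direction it is actually a bit more direct than the paper's. The paper argues as follows: since $\psi$ vanishes at every vertex, identifying all vertices of $\mG$ to a single vertex maps $\psi$ to a function $\tilde\psi$ on the equilateral flower $\mF=\mF(L,E)$ that is still an eigenfunction (for the same eigenvalue $\lambda_1(\mG)$), because the Kirchhoff condition at the single vertex of $\mF$ is just the sum of all the Kirchhoff conditions at the vertices of $\mG$; then, using $\lambda_1(\mG)\le\lambda_1(\mF)$ from Lemma~\ref{lem:principles}(2), one concludes $\lambda_1(\mG)=\lambda_1(\mF)=\pi^2E^2/L^2$ and invokes the equality case of Theorem~\ref{th:le}. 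You bypass the transfer to the flower entirely: restricting $\psi$ to any edge on which it does not vanish identically already gives a nonzero Dirichlet eigenfunction on $(0,\ell)$, hence $\lambda_1(\mG)\ge\pi^2/\ell^2=\pi^2E^2/L^2$, and the rest is the same. Your route is more elementary and avoids checking the Kirchhoff condition after gluing; the paper's route, on the other hand, yields the slightly stronger intermediate statement that any such $\psi$ descends to a genuine eigenfunction of the flower, which is of some independent interest. For the ``if'' direction the paper simply says the statement is clear; your explicit two-petal construction on the flower (with the cancellation of the $\pm 2\pi E/L$ contributions at the central vertex) is a welcome elaboration.
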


\begin{proof}[Proof of Theorem~\ref{th:le}]
Assume $\mG$ is any quantum graph as in the statement of the theorem, with $L>0$ and $E \geq 2$ given, and denote by $\tilde\mG$ the corresponding 
flower graph having the same number of edges with the same lengths as $\mG$, i.e., $\tilde\mG$ is the graph which may be formally obtained from $\mG$ 
by identifying all vertices of the latter. Then $H^1(\tilde\mG)$ may be canonically identified with the subset of $H^1(\mG)$ consisting of all functions 
$u \in H^1(\mG)$ such that $u(\mv_1)=\ldots=u(\mv_n)$ for all vertices $\mv_1,\ldots,\mv_n$ of $\mG$. It follows from Lemma~\ref{lem:principles}.(2) that $\lambda_1(\mG) \leq \lambda_1(\tilde\mG)$.

We now show that the equilateral flower graph $\mF(L,E) $ is the (unique) maximiser of $\lambda_1$ among all flower graphs of fixed total length $L>0$ and number 
of edges $E\geq 2$, which will then complete the proof of (\ref{upperest}), since $\lambda_1(\mF(L,E)) = \pi^2 E^2/L^2$ by~\eqref{lambda1flower}. To that end, let $\me_1$, $\me_2$ be the 
longest two edges of the arbitrary flower graph $\tilde\mG$ (or any two longest edges if these are not uniquely determined); by the pigeonhole principle, 
$M:=|\me_1|+|\me_2| \geq 2L/E$. Denote by $\mG_{12} $ the flower graph consisting of these two longest edges. Since each petal of $\tilde\mG$ may be regarded as a pendant graph attached to $\mG_{12}$, by Lemma~\ref{lem:principles}.(1) (or Lemma~\ref{lem:principles}.(3)),
\begin{displaymath}
	\lambda_1(\tilde\mG) \leq \lambda_1(\mG_{12})\ .
\end{displaymath}
But we see immediately that $\lambda_1(\mG_{12}) \leq 4\pi^2/M^2 \leq \pi^2 E^2/L^2$, since we may use any eigenfunction belonging to the first eigenvalue of a 
circle of length $M$ as a test function on $\mG_{12} $, provided it is rotated appropriately so as to satisfy the continuity condition at the points corresponding to the vertex of $\mG_{12}$. 
This establishes the inequality.

To prove the case of equality, we first note that this is only possible if $M=2L/E$, that is, if the two longest edges (and thus all other edges) have length $L/E$ each, and so $\tilde\mG$ must already be equilateral in this case.

We also note that pumpkins and flowers satisfy the equality, cf.~\eqref{lambda1pumpkin} and \eqref{lambda1flower}. We now show via a test function argument that any equilateral graph $\mG$ with at least three vertices must have $\lambda_1(\mG) < \pi^2 E^2/L^2$; the cases $V=1,2$ (and $E=1$) are trivial. To that end, suppose $\mG$ is equilateral with $V(\mG)\geq 3$. Then there exists a partition of $\mG$ into graphs $\mG_+, \mG_-, \mG_0$ with the following properties: $\mV(\mG_+) \cap \mV(\mG_-) = \emptyset$, every edge in $\mG_0$ begins at a vertex in $\mG_+$ and ends at a vertex in $\mG_-$, and $\mE(G_+)$ is non-empty (but it is possible that $\mG_-$ consists of a single vertex).

We now construct a test function $\psi$ on $\mG$ by setting $\psi|_{\mG_+} := 1$, $\psi|_{\mG_-} := -1$, and on each edge of $\mG_0$, which we identify with the interval $[0, L/E]$ (the point $0$ corresponding to a vertex in $\mG_+$ and $L/E$ to a vertex in $\mG_-$), we let $\psi(x)=\cos(\pi E x/L)$. Then $\psi \in H^1(\mG)$, and by construction $\psi|_{\mG_0}$ has Rayleigh quotient equal to $\pi^2 E^2/L^2$. Although $\psi$ will not necessarily have mean value zero on $\mG$, since $\psi$ is constant on $\mG \setminus \mG_0$, the Rayleigh quotient of $\psi - \int_\mG \psi$ on $\mG$ can only be lower than that of $\psi$ on $\mG_0$; the proof of this claim is identical to the proof of Lemma~\ref{lem:principles}.(1) or (4). Moreover, since $\mG \setminus \mG_0$ has positive measure, $\psi - \int_\mG \psi$ cannot be an eigenfunction on $\mG$; hence $\lambda_1(\mG)$ is strictly smaller than the Rayleigh quotient of $\psi-\int_\mG\psi$, which is no larger than $\pi^2 E^2 /L^2$.
\end{proof}

\begin{proof}[Proof of Corollary~\ref{cor:max}]
The ``if'' statement is clear. For the other direction, we show that the equality $\lambda_1(\mG) = \pi^2 E^2/L^2$ follows from the existence of an eigenfunction $\psi$ of $\mG$ vanishing on all vertices (in fact this is equivalent, that is, a \emph{characterisation}, but we do not need this). Since by Theorem~\ref{th:le} only pumpkins and flowers have this property, this establishes the claim. If $\psi(\mv_1)=\ldots = \psi(\mv_n) = 0$, then, by identifying all vertices of $\mG$, $\psi$ can be mapped directly onto a function $\tilde\psi \in H^1(\mF)$ having mean value zero and the same Rayleigh quotient, where $\mF$ is a flower graph having the same length and number of edges as $\mG$.

Since $\psi$ satisfies the equation $-u'' = \lambda_1(\mG) u$ on each edge of $\mG$ and the Kirchhoff condition at each vertex of $\mG$, the same is necessarily true of $\tilde\psi$ on $\mF$ (for the vertex condition, sum up \emph{all} the derivatives of $\psi$ on \emph{all} vertices of $\mG$; this is still zero). Hence $\tilde\psi$ is an eigenfunction on $\mF$ with eigenvalue $\lambda_1(\mG)$. Since $\lambda_1(\mG) \leq \lambda_1(\mF)$ by Lemma~\ref{lem:principles}.(2), and $\tilde\psi$ has the same Rayleigh quotient as $\psi$, the only possibility is that $\tilde\psi$ is associated with $\lambda_1(\mF)$. Hence there is equality.
\end{proof}

\begin{remark}
Theorem~\ref{th:le} can be regarded as a counterpart to the value of the spectral gap of the second derivative with Neumann conditions on an interval, given that $L/E$ is the arithmetic mean of all edge lengths. A similar assertion for another problem, the first eigenvalue of the Laplacian with a Dirichlet condition on at least one vertex, was recently proved in~\cite[Thm.~3.8]{DelRos14}. There are other, weaker estimates on $\lambda_1$ in terms of the arithmetic, geometric and harmonic means of the sequence of edge lengths of the graph, which we denote by ${\mathcal A},{\mathcal G},{\mathcal H}$, respectively: \cite[Thm.~2]{Kur15} states that
\[
	\lambda_1(\mG) \leq \frac{4\pi^2}{\mathcal{AH}}=\frac{4\pi^2}{\mathcal{G}^2};
\]
the factor of four may even be dropped if $\mG$ is bipartite. Observe that this estimate is still sharp, as it is attained in the case of (equilateral) flowers and pumpkins, although \eqref{upperest} is better, including in the case where $ \mG $ is bipartite.

An estimate of a similar flavour in terms of the maximal edge length $M$ of $\mG$ can be obtained via a trivial test function argument, by placing a suitable sine function on the longest edge, extended elsewhere by zero: $\lambda_1(\mG) \leq 4\pi^2/M^2$. This is however in general far worse than \eqref{upperest}. This may be the natural counterpart to the usual estimate in terms of in-radii in the case of domains.
\end{remark}

\section{Estimates in terms of the diameter alone: negative results} \label{sec:diam}

Another natural choice of metric quantity is the graph diameter $D$ (see \eqref{eq:d}). Diameter has been regarded as a natural quantity in spectral optimisation of (convex) domains at least since van den Berg's formulation of the Gap Conjecture in 1983, finally proved by Andrews and Clutterbuck just a few years ago \cite{AndClu11}, which in the case of the Dirichlet Laplacian states that the difference between the first two eigenvalues on a convex $d$-dimensional domain of diameter $D$ is at least $3\pi^2/D^2$, the corresponding value for an interval of length $D$. Diameter also plays an interesting role in spectral gap estimates for combinatorial graphs, often in interplay with the somewhat related quantity of mean distance, cf.~\cite{AloMil85,Moh91b,Chu97,Mol12}.

In this section we will prove that fixing the diameter of a graph alone is not in fact enough to yield upper or lower estimates on the spectral gap. Finding examples showing the impossibility of lower bounds is fairly straightforward; the question of an upper bound is however far more subtle. To that end we will introduce the \emph{pumpkin chains} referred to in the introduction (see Definition~\ref{def:hubgraph} below), which are an extremising class of graphs for diameter (Lemma~\ref{lem:reduction}). We will not need this extremising property in order to prove that no estimate is possible, but we will use it in Sections~\ref{sec:diam-pos} and \ref{sec:diam-length} to obtain various upper bounds in conjunction with other quantities such as $V$ or $L$. We also present it in this section to try to give a better insight into how to generate graphs with large spectral gaps. The same types of examples will allow us to show certain other quantities alone also do not control the spectral gap; see Sec.~\ref{sec:rad-cycle}.

\subsection{Lower bounds are impossible}

We will start with the easier case: proving that no lower bound on $\lambda_1$ is possible in terms of the diameter alone. 
This is in contrast with the theory of Laplacians on combinatorial graphs, whose lowest non-trivial eigenvalue is not less than $\frac{4}{VD}$, see e.g.~\cite[\S~5.3]{Mol12}.

\begin{example}
\label{ex:d-lower}
Given $D>0$, we construct a sequence of (finite and connected) dumbbell graphs $\mG_n$, all having diameter $D$, such that $\lambda_1(\mG_n) \to 0$: all these graphs will be flower-dumbbells.

More precisely, we start with an edge $\me_0$ of length $D/2$ with endpoints $\mv_{-1} \neq \mv_1$ and attach to each of these vertices one flower on $n$ edges $\me_{\pm 1},\ldots,\me_{\pm n}$ each of length $D/2$, so that the diameter of $\mG_n$ is $D$ for all $n\in \mathbb N$. Then of course the number of edges of $\mG_n$ becomes arbitrarily large as $n\to \infty$.

However, $\lambda_1(\mG_n) \to 0$: to see this we use a simple test function argument. Identifying $\me_0$ with the interval $(-D/4,D/4)$, we set 
\begin{displaymath}
\psi_n(z):=
\begin{cases}
1 \qquad &\text{if $z \in \me_k, \ 1\le k\le n$}\ ,\\
\sin(2\pi x/D) \qquad &\text{if $z \in (-D/4,D/4)\simeq \me_0$}\ ,\\
-1 \qquad &\text{if $z \in \me_k, \ -n\le k\le -1$}\ .
\end{cases}
\end{displaymath}
Then $\psi_n \in H^1(\mG_n)$ and by symmetry $\int_{\mG_n} \psi_n = 0$; moreover, $\int_{\mG_n} |\psi_n'|^2$ is constant in $n\geq 1$, $\psi_n'$ being supported on the fixed edge $\me_0$, while clearly $\int_{\mG_n} |\psi_n|^2 \to \infty$ as $n \to \infty$. It follows immediately from the variational characterisation \eqref{eq:rq} that $\lambda_1(\mG_n) \to 0$ as $n \to \infty$. $\blacksquare$ \end{example}

\begin{remark}
\label{rem:combs}
Making our graphs \emph{flower} dumbbells allows us to keep the number of vertices of $\mG_n$ equal to two. One can of course attach more general graphs of small diameter but large total length to achieve the same result. One can also find fundamentally different (i.e.~non-dumbbell) types of graphs; for example, consider a sequence of ``comb graphs'' as in Figure~\ref{fig:comb}.
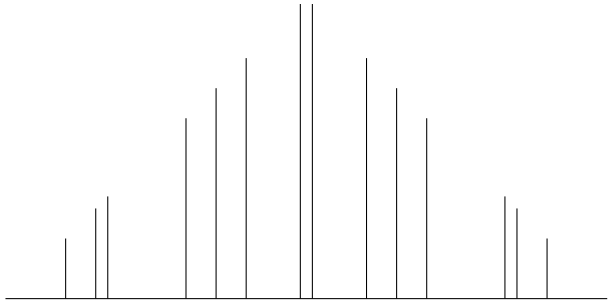
\begin{figure}
\begin{tikzpicture}[scale=0.8]
\draw (-5,0) -- (5,0);
\draw (0.1,0) -- (0.1,4.9);
\draw (1,0) -- (1,4);
\draw (1.5,0) -- (1.5,3.5);
\draw (2,0) -- (2,3);
\draw (3.3,0) -- (3.3,1.7);
\draw (3.5,0) -- (3.5,1.5);
\draw (4,0) -- (4,1);
\draw (-0.1,0) -- (-0.1,4.9);
\draw (-1,0) -- (-1,4);
\draw (-1.5,0) -- (-1.5,3.5);
\draw (-2,0) -- (-2,3);
\draw (-3.3,0) -- (-3.3,1.7);
\draw (-3.5,0) -- (-3.5,1.5);
\draw (-4,0) -- (-4,1);
\end{tikzpicture}\label{fig:comb}
\caption{A comb graph on fourteen prongs.}
\end{figure}
We take a sequence of graphs by adding more and more ``prongs'' (the vertical edges) to the comb, keeping it symmetric and making sure that its diameter remains constant. We then build a test function by constructing a stretched cosine along the lower edge, and extending it by a constant along the prongs; by symmetry, this function is orthogonal to the constant functions and it is easy to check that the corresponding Rayleigh quotient goes to zero provided the prongs do not accumulate too rapidly in the middle.
\end{remark}

\subsection{Upper bounds are also impossible}

To produce a counterexample we follow the natural line of reasoning one would use to attempt to prove that an upper bound exists. We first show that searching of a graph with the largest spectral gap it is enough to consider so-called pumpkin chains (defined below). Then we prove that spectral problem for a pumpkin chain is equivalent to a Sturm--Liouville one-dimensional problem with an integer weight. This analogy allows us to construct a sequence of pumpkin chains of fixed diameter with arbitrarily large gap.

\subsection*{Pumpkin chains maximise the spectral gap.}

The following class of finite graphs is going to play an important role in our constructions:

\begin{definition}
\label{def:hubgraph}
We call a metric graph $\mG$ on $ V $ vertices satisfying Assumption~\ref{ass:graph} a \emph{pumpkin chain} if
\begin{displaymath}
\begin{aligned}
	\# \{ \mv \in \mV: \mv \text{ is adjacent to exactly two other vertices} \} &= V-2,\\
	\# \{ \mv \in \mV: \mv \text{ is adjacent to exactly one other vertex} \} &= 2,
\end{aligned}
\end{displaymath}
and if additionally the lengths of any two edges connecting $\mv,\mw$ is the same for any pair of adjacent vertices $\mv,\mw$. 
\end{definition}

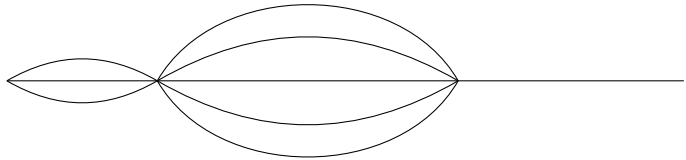
\begin{figure}
\label{fig:pumpkin-chain}
\begin{tikzpicture}
\coordinate (a) at (0,0);
\coordinate (b) at (2,0);
\coordinate (c) at (6,0);
\coordinate (d) at (9,0);
\draw (a) -- (b);
\draw[bend left]  (a) edge (b);
\draw[bend right]  (a) edge (b);
\draw (b) -- (c);
\draw[bend right]  (b) edge (c);
\draw[bend left=60]  (b) edge (c);
\draw[bend right=60]  (b) edge (c);
\draw[bend left]  (b) edge (c);
\draw (c) -- (d);
\end{tikzpicture}
\caption{A pumpkin chain on nine edges and four vertices.}
\end{figure}

In other words, $\mG$ consists of a chain of vertices, each one connected to a predecessor and a successor, all connecting two ``end vertices'', call them $\mv_0$ and $\mv_D$. Every path from $\mv_0$ to $\mv_D$ (of which there may be many) must pass through each of the $V-2$ interior vertices. Alternatively, a pumpkin chain may be thought of as a connected chain of pumpkin graphs; hence the name. See Figure~\ref{fig:pumpkin-chain}. Any two edges connecting the same pair of vertices (i.e.~belonging to the same pumpkin) will be called {\it parallel}.

The diameter of a pumpkin chain $\mG$ is given by $D(\mG) = {\dista}(\mv_0,\mv_D)$, and, if $\mG$ has at least three vertices, then the two end vertices are the only points realising $D$. 
Moreover, there is a canonical mapping $S: \mG \to [0,D]$ such that $S(\mv_0)=0$, $S(\mv_D)=D$, given by $S(z) = {\dista}(\mv_0, z)$. We shall call $S(z)$ the \emph{level} of a point 
$z \in \mG$.

The next result shows that for the purpose of estimating the spectral gap from above in terms of $V$ or $L$, we do not have to bother about any graphs other than pumpkin chains -- or, to put it in another way, no spectral information that is relevant to us is lost whenever we prune a quantum graph by deleting all edges that are not crossed by paths that realise the diameter.

\begin{lemma}
\label{lem:reduction}
Given any compact, connected, non-empty metric graph $\mG$, there exists a pumpkin chain $\tilde\mG$ such that
\begin{enumerate}
\item $D(\tilde\mG) = D(\mG)$, $L(\tilde\mG) \leq L(\mG)$ and $V(\tilde\mG) \leq V(\mG)+2$;
\item $\lambda_1(\tilde\mG) \geq \lambda_1(\mG)$.
\end{enumerate}
\end{lemma}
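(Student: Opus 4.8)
The plan is to turn $\mG$ into a pumpkin chain by a finite sequence of ``surgery'' moves, each one covered by Lemma~\ref{lem:principles} and each one affecting $D$, $L$ and $V$ in a controlled way. The organising tool is the \emph{level function} $S:=\dist(\mv_0,\cdot)$, where $\mv_0$ will be one endpoint of a diametral pair: note that a pumpkin chain is, in essence, a connected graph all of whose edges are \emph{monotone} for $S$ (meaning the length of the edge equals the difference of the $S$-values of its endpoints), whose vertices have pairwise distinct $S$-values, the smallest of which is $0$ and the largest of which equals the diameter. First, if $D=0$ there is nothing to prove; otherwise, by inserting at most two vertices of degree two at points realising the diameter, we may assume $D=\dist(\mv_0,\mv_D)$ for two vertices $\mv_0\ne\mv_D$, which changes neither $L$, $D$ nor $\lambda_1$ and raises $V$ by at most two. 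Then $S:\mG\to[0,D]$ is $1$-Lipschitz with slope $\pm1$ on each edge, $S(\mv_0)=0$ and $S(\mv_D)=D$.

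The next step is to make every edge monotone. Every loop is a pendant subgraph, so we delete all loops, invoking Lemma~\ref{lem:principles}.(1): this raises $\lambda_1$, lowers $L$, and changes no distance at all (a shortest path never enters a loop), hence leaves $D$ and $V$ alone. Next, for each remaining non-monotone edge $\me=\mv\mw$ with $S(\mv)\ne S(\mw)$, i.e.\ with $|\me|>|S(\mv)-S(\mw)|$, we replace $\me$ by an edge of length $|S(\mv)-S(\mw)|$, using Lemma~\ref{lem:principles}.(4); this again raises $\lambda_1$ and lowers $L$, while $V$ is unchanged. The key point, and essentially the only step that is not bookkeeping, is that this shortening preserves $D$: because $|\me|$ strictly exceeds the level gap of its endpoints, $\me$ can lie on no shortest path issuing from $\mv_0$, so shortening it changes no distance from $\mv_0$; in particular $\dist(\mv_0,\mv_D)=D$ is preserved, and since shortening an edge can only decrease distances (hence the diameter), $D$ is unchanged. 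As the vertex levels are also unchanged, all such edges can be dealt with one after another. At the end of this step the only non-monotone edges left are those joining two distinct vertices of equal level.

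Now let $0=t_0<t_1<\dots<t_k=D$ be the distinct levels attained by the vertices ($t_0=0$ because of $\mv_0$, and $t_k=D$ because $S\le D$ and $S(\mv_D)=D$). Subdivide every monotone edge by inserting vertices of degree two at all intermediate levels $t_j$ it crosses, so that each monotone edge now runs between two consecutive levels; this changes nothing but (temporarily) $V$. Then identify all vertices having the same level: each identification raises $\lambda_1$ by Lemma~\ref{lem:principles}.(2), and the vertices just inserted get absorbed into pre-existing ones, so $V$ returns to the number of distinct levels, which is at most the number of vertices present just before this step, hence at most $V(\mG)+2$. The equal-level edges become loops, which we delete (Lemma~\ref{lem:principles}.(1) once more). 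Call the outcome $\tilde\mG$: its vertices are $\mu_0,\dots,\mu_k$ at levels $t_0,\dots,t_k$, and between consecutive $\mu_{i-1},\mu_i$ there are, say, $p_i$ parallel edges of common length $t_i-t_{i-1}$, and every edge of $\tilde\mG$ is of this form.

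To see that $p_i\ge1$ for every $i$, take a shortest $\mv_0$--$\mv_D$ path in the graph obtained just before the identifications: along it $S$ equals arclength measured from $\mv_0$, so it is monotone and passes through every level in $(t_{i-1},t_i)$, hence traverses some monotone edge spanning $[t_{i-1},t_i]$, which after subdivision contributes an edge between $\mu_{i-1}$ and $\mu_i$; thus $\tilde\mG$ is a pumpkin chain in the sense of Definition~\ref{def:hubgraph}. Finally, every surgery move above weakly increases $\lambda_1$, so $\lambda_1(\tilde\mG)\ge\lambda_1(\mG)$; every move weakly decreases $L$ while subdivision and identification leave $L$ unchanged, so $L(\tilde\mG)\le L(\mG)$; and $V(\tilde\mG)=k+1\le V(\mG)+2$. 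It remains to note $D(\tilde\mG)=D$: the identifications can only shrink distances, so $D(\tilde\mG)\le D$, while the natural level function on the chain $\tilde\mG$ is $1$-Lipschitz and runs from $0$ at $\mu_0$ to $t_k=D$ at $\mu_k$, forcing every $\mu_0$--$\mu_k$ path to have length at least $D$. The hard part, as indicated, is the diameter-preservation of the edge-shortening in the second step; everything else is an exercise in applying Lemma~\ref{lem:principles} and tracking the level function.
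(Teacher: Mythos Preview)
Your argument is correct, and it is organised around the same two tools as the paper --- the level function $S=\dist(\mv_0,\cdot)$ and the surgery moves of Lemma~\ref{lem:principles} --- but the route is genuinely different. The paper builds the pumpkin chain \emph{from paths}: it enumerates a maximal family $\Gamma_1,\dots,\Gamma_n$ of simple $\mv_0$--$\mv_D$ paths, discards the complement as a union of pendant pieces, shortens each $\Gamma_k$ down to length $D$, and only then identifies vertices level by level. You instead work \emph{edgewise with the level function}: after removing loops, you shorten every non-monotone edge with distinct-level endpoints to its level gap (the crucial observation, which you state and which is easy to verify, being that such an edge lies on no geodesic from $\mv_0$, so the vertex levels are preserved), and then subdivide and identify by level, the remaining equal-level edges becoming loops to be discarded. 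Your approach is more intrinsic and avoids the somewhat ad hoc path enumeration and the bookkeeping of Step~4 in the paper; the paper's version is perhaps more explicitly constructive. One very minor imprecision: when you write that shortening a non-monotone edge ``changes no distance from $\mv_0$'', this is true for all vertices (which is what you actually use), but of course not for interior points of the edge being shortened; this does not affect the argument.
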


\begin{proof}
We will give an algorithm which constructs $\tilde\mG$ out of $\mG$ (note that we do not claim $\tilde\mG$ to be unique).

\noindent{\emph{Step 1.}} Choose any two points $x,y \in \mG$ such that ${\dista}(x,y) = D := D(\mG)$ and relabel them as $\mv_0, \mv_D$, i.e., relabel them as (artificial) vertices of $\mG$. Denote by $\Gamma_1$ any shortest path through $\mG$ connecting $\mv_0$ and $\mv_D$, that is, $\Gamma_1$ is a subgraph of $\mG$ with $L(\Gamma_1)=D(\mG)$, and any $\mv \in V(\mG) \cap \Gamma_1 \setminus \{\mv_0,\mv_D\}$ has degree two when considered as a vertex of $\Gamma_1$. Without loss of generality, $\Gamma_1$ may be considered as a directed path from $\mv_0$ to $\mv_D$, where no part of $\Gamma_1$, be it edge or vertex, is counted twice.

\noindent{\emph{Step 2.}} Assuming such a path to exist, denote by $\Gamma_2$ any (directed) shortest path from $\mv_0$ to $\mv_D$ such that $\Gamma_2 \not\subset \Gamma_1$ (as sets), and no point of $\mG$ is counted twice by $\Gamma_2$ (in particular $\Gamma_2$ does not contain any loops). Since $\mG$ is compact, if there is any such path, there will be one of minimal length. Repeat this process inductively to obtain $n$ such paths $\Gamma_1, \ldots, \Gamma_n \subset \mG$, that is, $\Gamma_k$ is a directed path from $\mv_0$ to $\mv_D$ without passing through any point twice, with
\begin{displaymath}
\mG_k \not\subset \bigcup_{j=1}^{k-1} \mG_j
\end{displaymath}
for all $k \geq 2$, and $D = L(\Gamma_1) \leq \ldots \leq L(\Gamma_k) \leq \ldots$. Since $\mG$ is compact, this process will terminate after $n \geq 1$ steps, that is, no further path $\Gamma \subset \mG$ can be found without either being contained in $\bigcup_{k\leq n} \Gamma_k$ or passing through at least one point of $\mG$ at least twice.

\noindent{\emph{Step 3.}} Set
\begin{displaymath}
	\hat{\mG} := \bigcup_{k=1}^n \Gamma_k,
\end{displaymath}
taking the vertex set of $\hat{\mG}$ to be 
\[
\mV(\hat{\mG}):=\{\mv_0,\mv_D\} \cup \{\mv\in\mV: \mv \text{ has degree $\geq 2$ in } \hat{\mG}\}\ .
\]
Then clearly $D(\hat{\mG})=D(\mG)$, $L(\hat{\mG})\leq L(\mG)$ and $V(\hat{\mG})\leq V(\mG)+2$. We also see directly that $\lambda_1(\hat{\mG}) \geq \lambda_1(\mG)$, as follows from Lemma~\ref{lem:principles}.(1): since any connected component of $\mG \setminus \hat{\mG}$ is connected to $\hat\mG$ at a single vertex (otherwise we could find another path not contained in $\Gamma_1,\ldots,\Gamma_n$ as in step two), adding this component to $\hat\mG$ can only decrease $\lambda_1$. Since $\mG \setminus \hat{\mG}$ can only consist of finitely many connected components, it follows that $\lambda_1(\hat{\mG}) \geq \lambda_1(\mG)$.

\noindent{\emph{Step 4.}} We shorten the paths $\Gamma_2,\ldots,\Gamma_n$ inductively to obtain new paths $\tilde\Gamma_2,\ldots,\tilde\Gamma_n$ so that (with $\tilde\Gamma_1=\Gamma_1$) $L(\tilde\Gamma_1) = L(\tilde\Gamma_2) = \ldots = L(\tilde\Gamma_3) = D$, without changing the topology of $\hat{\mG}$ (although some edges may contract to a point).

More precisely, we form a new graph $\tilde\mG_2$ from $\hat{\mG}$ by shortening $\Gamma_2 \setminus (\Gamma_1 \cup \mV)$ by length $L(\Gamma_2)-D \geq 0$; then $\lambda_1(\tilde\mG_2) \geq \lambda_1(\hat{\mG})$ by Lemma~\ref{lem:principles}.(4). Note that this may also shorten (a subset of) the other paths $\Gamma_3,\ldots,\Gamma_n$, but each will still have length at least $D$, and they will still be ordered by increasing length. We then proceed inductively, creating a graph $\tilde\mG_k$ out of $\tilde\mG_{k-1}$ by shortening $\Gamma_k \setminus \bigcup_{j\leq k-1} \tilde\Gamma_j$ to have length $D$. We end up with a graph $\tilde\mG_n$ consisting of $n$ possibly overlapping paths $\tilde\Gamma_1,\ldots,\tilde\Gamma_n$ from $\mv_1$ to $\mv_2$, such that $L(\tilde\Gamma_k) = D$ for all $k=1,\ldots,n$, with $D(\tilde\mG_n)=D$, $L(\tilde\mG_n) \leq L(\mG)$, $V(\tilde\mG_n) \leq V(\mG)+2$ and $\lambda_1(\tilde\mG_n) \geq \lambda_1(\mG)$.

\noindent{\emph{Step 5.}} Finally, we obtain a pumpkin chain $\tilde\mG$ out of $\tilde\mG_n$ by identifying all points at a given level where (at least) two of the paths have a common vertex. For each $k$, there exists a mapping $S_k: \tilde\Gamma_k \to [0,D]$ given by  $S_k(z)=d_{\tilde\Gamma_k}(z,\mv_0)$ for all $z \in \tilde\Gamma_k$, where $d_{\tilde\Gamma_k}$ denotes (Euclidean) distance measured along the path $\tilde\Gamma_k$. The vertices $\mv \in \mV(\tilde\mG_n) \cap \tilde\Gamma_k$ along the path $\tilde\Gamma_k$ have the form $\mv_0 = S_k^{-1}(0)$, $\mv_k^1=S^{-1}(x_k^1)$, \ldots, $\mv_k^{n_k} = S^{-1}(x_k^{n_k})$, $\mv_D = S_k^{-1}(D)$ for certain levels $x_k^1,\ldots,x_k^{n_k} \in (0,D)$, for each $k=1,\ldots,n$.

For every $x_k^j$, for all $j=1,\ldots,n_k$ and all $k=1,\ldots,n$, we insert an artificial vertex at each point $S_i^{-1}(x_k^j)$ for all $i=1,\ldots,n$, and then form $\tilde\mG$ out of $\tilde\mG_n$ by identifying for every fixed pair $j,k$ the vertices $S_i^{-1}(x_k^j)$ for all $i=1,\ldots,n$. The graph $\tilde\mG$ is then a pumpkin chain with the same diameter, length and number of vertices as $\tilde\mG_n$, in particular satisfying condition (1) of the lemma, and $\lambda_1(\tilde\mG) \geq \lambda_1(\tilde\mG_n)$ (cf.~Lemma~\ref{lem:principles}.(3)), so that (2) is satisfied as well.
\end{proof}

\begin{remark}
\label{rem:vertex-d}
It is clear that $ V(\tilde{\mG}) $ will be equal to $ V(\mG) $ if the diameter is realised as a distance between certain two vertices -- there will be no need to introduce artificial vertices in Step 1. In some cases it is more convenient to work with the combinatorial diameter $D_\mV$ (see \eqref{eq:comb-dv}), which we recall is the maximal distance within $\mG$ of any two \emph{vertices} of $\mG$: an inspection of the proof of Lemma~\ref{lem:reduction} reveals that it can be trivially modified to allow one to prove the existence, for any given $\mG$, of a corresponding pumpkin chain $\tilde\mG$ with
\begin{enumerate}
\item $D_\mV(\tilde\mG) = D_\mV(\mG)$, $L(\tilde\mG) \leq L(\mG)$ and $V(\tilde\mG) \leq V(\mG)$, and
\item $\lambda_1(\tilde\mG) \geq \lambda_1(\mG)$.
\end{enumerate}
All of our statements involving the diameter -- at least, all the negative statements as well as all the upper bounds in Section~\ref{sec:diam-pos} below -- also hold for $D_\mV$, as is easy to see. 

Note that if $ \mG $ is e.g.\ a pumpkin chain, a tree, a (higher dimensional) cube or a complete bipartite graph, see e.g.~\cite[Chapter~1]{Die05}, then $ D =D_\mV$, but this is not the case if $\mG$ is a complete graph.
\end{remark}

\subsection*{Reduction to a Sturm--Liouville problem.}

Next we prove that we can find an eigenfunction $\psi_1$ of $\lambda_1(\mG)$ (where $\mG$ is a pumpkin chain) which only depends on the level $S(z)$ of any $z \in \mG$, that is, $\psi_1$ takes on the same value on all parallel edges: this is the crucial argument that will allow us to study our problem by Sturm--Liouville arguments.

\begin{lemma}
\label{lemma:level}
Suppose $\mG$ is a pumpkin chain. There exists an eigenfunction $\psi_1$ associated with $\lambda_1(\mG)$ and a function $\varphi:[0,D]\to \R$ such that $\psi_1(z) = \varphi(S(z))$ for all $z \in \mG$.
\end{lemma}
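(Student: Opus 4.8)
\emph{The plan.} I would exploit the symmetry of a pumpkin chain under permutations of parallel edges. Write the chain as vertices $\mv_0,\mv_1,\dots,\mv_k$ (with $\mv_0$ and $\mv_k=\mv_D$ the two endpoints, $k=V(\mG)-1$), where for $i=1,\dots,k$ the $i$th pumpkin $P_i$ consists of $m_i\ge 1$ parallel edges of common length $\ell_i$ joining $\mv_{i-1}$ and $\mv_i$; put $\sigma_i=\ell_1+\dots+\ell_i$, so $\sigma_0=0$, $\sigma_k=D$ and $S(\mv_i)=\sigma_i$. Orienting every edge of $P_i$ from $\mv_{i-1}$ to $\mv_i$, a function depends only on the level $S(z)$ precisely when it takes the same value on all parallel edges, i.e.\ when it is invariant under the group $\mathfrak{G}:=\prod_{i=1}^{k}\mathfrak{S}_{m_i}$ of permutations of parallel edges. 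This $\mathfrak{G}$ acts on $\mG$ by metric-graph automorphisms fixing every vertex, so the induced unitary action on $L^2(\mG)$ commutes with $-\Delta$ and preserves $\operatorname{dom}(\Delta)$. Let $\mathcal H_{\mathrm{lev}}:=L^2(\mG)^{\mathfrak{G}}$ be the (closed, $-\Delta$-invariant) subspace of level functions and $P$ the orthogonal projection onto it, which is just the average over $\mathfrak{G}$.

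First I would symmetrise an eigenfunction. Let $\psi_0$ be any eigenfunction for $\lambda_1(\mG)$. Then $\bar\psi_0:=P\psi_0=\frac1{|\mathfrak{G}|}\sum_{g\in\mathfrak{G}}g\cdot\psi_0$ lies in $\operatorname{dom}(\Delta)$, solves $-\Delta\bar\psi_0=\lambda_1(\mG)\bar\psi_0$, is level, and satisfies $\int_\mG\bar\psi_0=\int_\mG\psi_0=0$. If $\bar\psi_0\ne 0$ it is the desired eigenfunction $\psi_1$, and one sets $\varphi(t):=\psi_1(z)$ for any $z$ with $S(z)=t$, which is well defined exactly because $\psi_1$ is level. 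So the whole point is the degenerate case $\bar\psi_0=0$, which I expect to be the main obstacle: here the spectral gap partly lives in a nontrivial isotypic component of the $\mathfrak G$-action, and averaging simply destroys the eigenfunction, so a level eigenfunction must be produced by hand.

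To handle $\bar\psi_0=0$: since every vertex is fixed by $\mathfrak{G}$, the functions $\psi_0$ and $\bar\psi_0$ agree at every vertex, so $\psi_0$ vanishes at all vertices of $\mG$; hence $\psi_0$ lies in the domain of the Dirichlet Laplacian, whose spectrum decouples edgewise into $\bigcup_{i=1}^k\{N^2\pi^2/\ell_i^2:N\in\N\}$, and therefore $\lambda_1(\mG)=N^2\pi^2/\ell_{i_0}^2$ for some index $i_0$ with $\psi_0\not\equiv 0$ on $P_{i_0}$ and some $N\ge 1$. I would then show $\lambda_1(\mG)$ is attained by the Rayleigh quotient restricted to $\mathcal H_{\mathrm{lev}}$: identifying a level function with its profile $u\colon[0,D]\to\R$ and $\mathcal H_{\mathrm{lev}}$ with $H^1(0,D)$ carrying the weight $w=m_i$ on $(\sigma_{i-1},\sigma_i)$, take $u$ equal to the single cosine arch $\cos\!\big(N\pi(s-\sigma_{i_0-1})/\ell_{i_0}\big)$ on $[\sigma_{i_0-1},\sigma_{i_0}]$ and locally constant (equal to $1$ on the left, to $(-1)^N$ on the right) elsewhere, shifted by an additive constant so that $\int_\mG u=0$ (possible since that integral is affine in the constant with slope $L>0$). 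Since $\int_0^{\ell_{i_0}}\cos(N\pi t/\ell_{i_0})\,dt=0$ one gets $\int_\mG|u|^2\ge m_{i_0}\ell_{i_0}/2$ while $\int_\mG|u'|^2=m_{i_0}N^2\pi^2/(2\ell_{i_0})$, so the Rayleigh quotient of $u$ is at most $N^2\pi^2/\ell_{i_0}^2=\lambda_1(\mG)$; combined with the trivial bound $\inf\{\text{R.q.}(v):v\in\mathcal H_{\mathrm{lev}},\ \int_\mG v=0\}\ge\lambda_1(\mG)$, this infimum equals $\lambda_1(\mG)$. A minimiser solves the corresponding weighted Neumann Sturm--Liouville problem on $[0,D]$; since for a level function the Kirchhoff condition at $\mv_i$ is exactly the $w$-weighted balance of the profile derivatives across $P_i$ and $P_{i+1}$, the minimiser pulls back to a function $\psi_1$ on $\mG$ with $-\psi_1''=\lambda_1(\mG)\psi_1$ and natural vertex conditions — the sought level eigenfunction, with $\varphi$ its profile.

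If the bookkeeping in the test-function estimate turned out to be awkward, a fallback is a perturbation argument: perturb the lengths $\ell_i$ slightly so that $\lambda_1$ avoids all the exceptional values $N^2\pi^2/\ell_i^2$ — a codimension-one condition — for which the vertex-continuity computation above forces \emph{every} $\lambda_1$-eigenfunction to be level, and then pass to the limit using continuity of $\lambda_1$ and of the normalised eigenfunction under edge-length perturbations. Either way, the essential difficulty is localised entirely in the degenerate case $\bar\psi_0=0$.
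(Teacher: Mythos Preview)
Your argument is correct and follows the same architecture as the paper's: average an eigenfunction over the permutation symmetry of parallel edges (the paper phrases this as ``averaging the value of $\psi$ at each fixed level''), observe that the average can fail only if the original eigenfunction vanishes at every vertex, and then treat that degenerate case separately.

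The one genuine difference is how you resolve the degenerate case. The paper argues that if $\psi_0$ vanishes at every vertex, then $\lambda_1(\mG)$ is a Dirichlet eigenvalue on \emph{each} pumpkin, forces the chain to be equilateral, and then writes down the alternating $\pm\cos(\pi x/a)$ eigenfunction explicitly. You instead extract a single pumpkin $P_{i_0}$ with $\lambda_1(\mG)=N^2\pi^2/\ell_{i_0}^2$, build the cosine-on-one-pumpkin test function, and use it to pin the restricted (level) Rayleigh infimum to $\lambda_1(\mG)$, then invoke existence of a minimiser for the weighted Sturm--Liouville problem. Your route is arguably cleaner for the lemma as stated, since it sidesteps the equilaterality argument entirely; the paper's version, on the other hand, yields the extra structural information that the degenerate case can only occur for equilateral chains, which is exploited elsewhere (Corollary~\ref{cor:max}). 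Your fallback perturbation argument would also work, but is not needed.
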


\begin{proof}
{
If there exists an eigenfunction $\psi$ which does not vanish on at least one vertex of $\mG$, then we may construct a new eigenfunction $\psi_1$ by averaging the value of $\psi$ at each fixed level. It is easily checked that $\psi_1 \not\equiv 0$ created in this way is an eigenfunction for $\lambda_1(\mG)$, as a linear combination of eigenfunctions on each edge, which also satisfies the Kirchhoff condition at the vertices, and which by construction only depends on the level. So we merely need to ensure the existence of such an eigenfunction $\psi$.
}

{
Denote the vertices of $\mG$ by $\mv_0,\mv_1,\ldots,\mv_V=\mv_D$ ($V \geq 2$) and the $V-1$ pumpkin subgraphs of $\mG$ by $\mG_1,\ldots, \mG_{V-1}$, $\mG_k$ running from vertex $\mv_{k-1}$ to $\mv_k$. If there \emph{exists} an eigenfunction $\psi$ such that $\psi(\mv_k)=0$ for all $k=0,\ldots,V$, then, since $\psi$ cannot vanish identically on any pumpkin, $\psi$ is in particular an eigenfunction of the Dirichlet Laplacian on each $\mG_k$. Denoting by $\mu_n(\mG_k)$ the $n$th Dirichlet eigenvalue of $\mG_k$, it follows that for each $k$ there exists $n\geq 1$ with $\lambda_1(\mG) = \mu_n(\mG_k) \geq \mu_1(\mG_k) \geq \lambda_1(\mG)$, the last inequality following by an easy monotonicity argument. This implies $\mG$ must necessarily be equilateral, and $\psi$ restricted to each edge, identified with $(0,a) \subset \R$, is either identically $0$ or of the form $\pm\sin (\sqrt{\lambda_1} x) = \pm\sin (\pi x/a)$, $x \in (0,a)$. It follows that each $\mG_k$ has another eigenfunction taking on the value $\pm 1$ at $\mv_{k-1}$ and $\mp 1$ at $\mv_k$ (i.e., of the form $\pm \cos (\pi x/a)$ on each of its corresponding edges); we may therefore build another eigenfunction on $\mG$ taking on the alternate values $1$ and $-1$ at successive vertices. This proves the claim. (Note that it also follows that $\mV=2$ in this case, since for $\mV \geq 3$ one can see directly that any such eigenfunction cannot be associated with $\lambda_1$, but we do not need this.)
}
\end{proof}

 Let $ \psi_1 $ be such an eigenfunction as in the lemma, corresponding to $\lambda_1(\mG)$. We denote by 
\begin{equation} \label{rho}
\rho(x) = \# S^{-1}\{x\}
\end{equation}
 the counting function for the number of parallel edges of $\mG$ at the level $x \in [0,D]$, and by $\varphi(x):= \psi_1 (S^{-1}(x))$ 
 the well-defined composite function (in a slight abuse of notation) in $H^1(0,D)$; since $S'(z) = 1$ almost everywhere, it follows that
\begin{equation}\label{eq:1D}
\begin{split}
\lambda_1(\mG) &= \frac{\int_0^D \varphi'(x)^2\rho(x)\mathrm{d}x}{\int_0^D \varphi(x)^2\rho(x)\mathrm{d}x}\\
&=\inf \left\{ \frac{\int_0^D u'(x)^2\rho(x)\mathrm{d}x}{\int_0^D u(x)^2\rho(x)\mathrm{d}x}: u \in H^1(0,D),\,
	\int_0^D u(x)\rho(x)\mathrm{d}x =0 \right\};
\end{split}\end{equation}
for the second equality, ``$\geq$'' is clear, and ``$\leq$'' follows since every such function $u \in H^1(0,D)$ can be mapped canonically onto a function $g \in H^1(\mG)$ via $g(z) = u \circ S(z)$ for $z \in \mG$ (so that we may use the minimisation property of $\psi$ on $\mG$). We see in particular that looking for a pumpkin chain with the largest spectral gap it is enough to study a one-dimensional problem.

We shall now introduce a principle which allows us to pass from pumpkin chains to actual one-dimensional weighted eigenvalue problems. The central argument here is that the set of possible weights $\rho$ corresponding to pumpkin graphs, $\{\rho\in L^\infty(0,D;\mathbb N\setminus\{0\})\}$, is large enough within $L^\infty(0,D)$ in a sense relevant for our eigenvalue problems. This in turn is based on the scale invariance of \eqref{eq:1D}: nothing changes if we multiply $\rho$ by an arbitrary nonzero constant.

\begin{proposition}
\label{prop:weighted}
Given any function $\omega \in C^1[0,D]$ such that $\min_{x\in [0,D]} \omega(x)>0$, there exists a sequence of pumpkin chains $\mG_n$ of diameter $D$ such that $\lambda_1(\mG_n) \to \lambda_1(\omega)$.
\end{proposition}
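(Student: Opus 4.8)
\textbf{Proof proposal for Proposition~\ref{prop:weighted}.}

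The plan is to realise the smooth positive weight $\omega$ as a limit of integer-valued step weights, each of which corresponds to an actual pumpkin chain via the correspondence~\eqref{eq:1D}, and then to argue that the associated first eigenvalues converge. First I would exploit the scale invariance of the Rayleigh quotient in~\eqref{eq:1D}: multiplying $\rho$ by a positive constant does not change $\lambda_1$, so I am free to replace $\omega$ by $c\,\omega$ for any $c>0$. Fix a large integer $N$ and consider $N\omega$; I would like to approximate $N\omega$ from the class of integer-valued functions. Partition $[0,D]$ into $m$ subintervals $I_j=[x_{j-1},x_j]$ of equal length $D/m$, and on each $I_j$ set $\rho_{N,m}(x):=\lceil N\,\omega(x_{j-1})\rceil$ (or any integer within distance $1$ of the constant $N\omega$ on $I_j$). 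Because $\omega\in C^1[0,D]$ with $\min\omega>0$, we have, for $m$ large depending on $N$, the uniform bound $\bigl|\tfrac1N\rho_{N,m}-\omega\bigr|\le \tfrac1N+\tfrac{D}{m}\|\omega'\|_\infty$, and in particular $\rho_{N,m}/N\to\omega$ uniformly on $[0,D]$ as first $m\to\infty$ and then $N\to\infty$; moreover $\rho_{N,m}$ stays bounded above and below by positive constants uniformly.

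Next I would realise $\rho_{N,m}$ by a genuine pumpkin chain. The function $\rho_{N,m}$ is a step function taking the positive integer value $k_j:=\lceil N\omega(x_{j-1})\rceil$ on the $j$-th subinterval; so let $\mG_{N,m}$ be the pumpkin chain on $m+1$ vertices $\mv_0,\dots,\mv_m$ whose $j$-th pumpkin consists of $k_j$ parallel edges, each of length $D/m$, joining $\mv_{j-1}$ to $\mv_j$. This satisfies Assumption~\ref{ass:graph}, it is a pumpkin chain in the sense of Definition~\ref{def:hubgraph}, its diameter is $D$, and its level counting function~\eqref{rho} is exactly $\rho=\rho_{N,m}$. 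By the identity~\eqref{eq:1D}, $\lambda_1(\mG_{N,m})$ equals the first nonzero eigenvalue of the one-dimensional weighted problem with weight $\rho_{N,m}$, which by scale invariance equals $\lambda_1\bigl(\tfrac1N\rho_{N,m}\bigr)$ in the notation $\lambda_1(\cdot)$ used for weighted Sturm--Liouville problems in the statement.

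It remains to prove the continuity statement: if $w_n\to\omega$ uniformly on $[0,D]$ with $0<c\le w_n\le C<\infty$, then $\lambda_1(w_n)\to\lambda_1(\omega)$. This is the main (and only genuinely analytic) point, but it is routine: for any fixed test function $u\in H^1(0,D)$ with $\int_0^D u\,\omega\,dx=0$, one perturbs $u$ to $u_n:=u-\bigl(\int u\,w_n\bigr)/\bigl(\int w_n\bigr)$ so that $\int u_n w_n=0$; since $\int u\,w_n\to\int u\,\omega=0$ and $\int w_n\ge cD>0$, the correction is $o(1)$, and then $\int (u_n)'^2 w_n/\int (u_n)^2 w_n\to \int u'^2\omega/\int u^2\omega$ by uniform convergence of $w_n$ and boundedness away from $0$ and $\infty$; taking the infimum gives $\limsup_n\lambda_1(w_n)\le\lambda_1(\omega)$. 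The reverse inequality $\liminf_n\lambda_1(w_n)\ge\lambda_1(\omega)$ follows symmetrically by taking near-minimisers $u_n$ for $\lambda_1(w_n)$ (normalised, say, $\int u_n^2 w_n=1$ and $\int u_n w_n=0$), noting the uniform $H^1$-bound this provides, extracting a weakly convergent subsequence $u_n\rightharpoonup u$ in $H^1$ (hence uniformly by compact embedding), and passing to the limit in the Rayleigh quotient and the constraint using again the uniform convergence and two-sided bounds on $w_n$. Finally, diagonalising the double sequence $\lambda_1(\mG_{N,m})$ — first sending $m\to\infty$ for each $N$, then $N\to\infty$ — produces a single sequence of pumpkin chains $\mG_n$ of diameter $D$ with $\lambda_1(\mG_n)\to\lambda_1(\omega)$, as required. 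The only mild obstacle worth flagging is bookkeeping in the diagonal argument and the requirement that each step weight genuinely be an integer (handled by the ceiling together with scale invariance); the eigenvalue continuity itself is standard weighted Sturm--Liouville perturbation theory.
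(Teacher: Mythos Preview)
Your proposal is correct and follows essentially the same route as the paper: approximate $\omega$ in $L^\infty$ by rescaled positive-integer-valued step functions (using scale invariance of the Rayleigh quotient), realise each step function as the level-counting weight of a pumpkin chain via~\eqref{eq:1D}, and invoke continuity of $\lambda_1(\cdot)$ under $L^\infty$-convergence of uniformly positive weights. The paper isolates the continuity statement as a separate lemma (Lemma~\ref{lem:weighted-conv}) whose proof it omits as standard, and likewise omits the explicit approximation argument; you have simply filled in both of these omitted details, together with the diagonalisation, explicitly.
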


Here we have denoted by $\lambda_1(\omega)>0$ the first non-trivial eigenvalue of the Sturm--Liouville problem
\begin{equation}\label{eq:weighted}
\left\{
\begin{aligned}
	-(\omega(x)u'(x))' &= \lambda \omega(x) u(x), \qquad x \in (0,D),\\
		u'(0) &= u'(D) = 0\ ,
\end{aligned}
\right.
\end{equation}
i.e., 
\begin{equation}
\label{eq:weighted-rq}
\lambda_1(\omega) = \inf \left\{ \frac{\int_0^D u'(x)^2\omega(x)\,\textrm{d}x}{\int_0^D u(x)^2\omega(x)\,\textrm{d}x} :
	u \in H^1(0,D),\,\int_0^D u(x)\omega(x)\textrm{d}x = 0 \right\},
\end{equation}
as is easy to see by the usual means. 

\begin{remark}
\label{rem:weighted}
We note that \eqref{eq:weighted-rq} makes sense when $\omega \in L^\infty(0,D)$ with $\essinf \omega(x) > 0$, and in this case it is still possible to find pumpkin chains $\mG_n$ with $\lambda_1(\mG_n) \to \lambda_1(\omega)$ -- as our proof shows, although we will not go into details. Hence, it follows from \eqref{eq:1D}, Lemma~\ref{lem:reduction} and Proposition~\ref{prop:weighted} that
\[
\begin{split}
	\sup\{\lambda_1(\mG): D(\mG)=D\} &= \sup\{\lambda_1(\mG): \mG \text{ pumpkin chain, } D(\mG)=D\} \\
	&= \sup\{\lambda_1(\omega):\omega\in L^\infty(0,D;\mathbb N\setminus{\{0\}})\}\ .
\end{split}
\]
We can in fact further refine the class of functions that are relevant to us: as noted, it follows directly from \eqref{eq:weighted-rq} that $\lambda_1(\alpha\omega) = \lambda_1(\omega)$ for any $\alpha>0$, which leads us finally to consider
\begin{displaymath}
W = \left\{ \rho \in L^\infty (0,D): \varepsilon_\rho :=\essinf \rho > 0,\right.\\ \left.\rho(0,D) \text{ is a finite subset of }
	\varepsilon_\rho\,\N \setminus \{0\} \right\},
\end{displaymath}
the set of all weights which can be obtained by multiplying the weight associated with a pumpkin chain by an arbitrary positive constant. In this way, our original spectral problem on a graph has been reduced to a suitable Sturm--Liouville-like problem with possibly discontinuous elliptic coefficients. We consider this transference principle to be the largest point of interest of this article, which can likely be applied to different investigations in the theory of quantum graphs.
\end{remark}

The proof of Proposition~\ref{prop:weighted} relies on the following lemma, whose proof is standard and hence omitted.

\begin{lemma}
\label{lem:weighted-conv}
Given $\omega_n,\omega \in L^\infty(0,D)$ with $\essinf \omega(x), \essinf \omega_n(x) \geq \varepsilon > 0$ for all $n \in \N$, if $\omega_n \to \omega$ in $L^\infty(0,D)$, then $\lambda_1(\omega_n) \to \lambda_1(\omega)$ (with $\lambda_1(\omega_n), \lambda_1(\omega)$ defined as in \eqref{eq:weighted-rq}).
\end{lemma}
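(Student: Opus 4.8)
The plan is to prove Lemma~\ref{lem:weighted-conv}, the $L^\infty$-continuity of the first nontrivial eigenvalue of the weighted Neumann problem \eqref{eq:weighted-rq}, by a direct comparison of Rayleigh quotients. The key observation is that the uniform lower bound $\essinf \omega_n, \essinf \omega \geq \varepsilon > 0$, together with the fact that $L^\infty$-convergence gives a uniform upper bound $\|\omega_n\|_\infty \leq M$ for some $M$ and all large $n$, means that each weight $\omega_n$ is comparable to $\omega$ in the sense that $\omega_n/\omega$ and $\omega/\omega_n$ are uniformly bounded; concretely, setting $\eta_n := \|\omega_n - \omega\|_\infty \to 0$, we have $(1-\eta_n/\varepsilon)\omega(x) \leq \omega_n(x) \leq (1+\eta_n/\varepsilon)\omega(x)$ for a.e.\ $x$, once we write $\omega_n = \omega\cdot(\omega_n/\omega)$ and estimate $|\omega_n/\omega - 1| = |\omega_n - \omega|/\omega \leq \eta_n/\varepsilon$.

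First I would record that the two numerators $\int_0^D u'^2 \omega_n$ and $\int_0^D u'^2 \omega$, and likewise the two denominators, differ by a multiplicative factor lying in $[1-\eta_n/\varepsilon, 1+\eta_n/\varepsilon]$, uniformly in $u$. This is the heart of the argument and the only genuinely quantitative step. The subtlety — and the one place care is needed — is that the \emph{constraint} defining the admissible class also depends on the weight: a test function $u$ with $\int_0^D u\,\omega = 0$ need not satisfy $\int_0^D u\,\omega_n = 0$. So I cannot simply plug a minimiser for $\omega$ into the Rayleigh quotient for $\omega_n$. I expect this mismatch of constraints to be the main obstacle.

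To handle it, I would use the standard device of re-centring the test function. Given $u$ admissible for $\omega$ (so $\int u\,\omega = 0$), let $c_n := \bigl(\int_0^D u\,\omega_n\bigr)/\bigl(\int_0^D \omega_n\bigr)$ and set $\tilde u := u - c_n$, which is admissible for $\omega_n$. Since $\int u\,\omega = 0$, one has $\int_0^D u\,\omega_n = \int_0^D u\,(\omega_n - \omega)$, which is bounded by $\eta_n\int_0^D|u| \leq \eta_n\sqrt{D}\,\|u\|_{L^2}$; hence $|c_n| \leq C\eta_n\|u\|_{L^2}$ for a constant $C$ depending only on $\varepsilon$ and $D$. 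The numerator is unchanged under subtracting a constant, $\tilde u' = u'$, while the denominator $\int \tilde u^2\,\omega_n$ differs from $\int u^2\,\omega_n$ by a term controlled by $|c_n|$ and $\|u\|_{L^2}$, hence by $O(\eta_n)\|u\|_{L^2}^2$. Combining with the multiplicative weight comparison shows that the Rayleigh quotient of $\tilde u$ against $\omega_n$ exceeds that of $u$ against $\omega$ by at most $O(\eta_n)$ (absorbing the $\|u\|_{L^2}^2$ into the denominator via the uniform lower bound), uniformly over admissible $u$. Taking the infimum over $u$ yields $\lambda_1(\omega_n) \leq \lambda_1(\omega) + o(1)$.

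By the symmetric argument, interchanging the roles of $\omega_n$ and $\omega$ (the hypotheses are symmetric since $\essinf \omega_n \geq \varepsilon$ uniformly), I obtain $\lambda_1(\omega) \leq \lambda_1(\omega_n) + o(1)$. Together these give $|\lambda_1(\omega_n) - \lambda_1(\omega)| \to 0$, as claimed. The entire argument is elementary, requiring only the variational characterisation \eqref{eq:weighted-rq}, the Cauchy--Schwarz inequality, and the uniform ellipticity bounds; this is precisely why the authors declare the proof standard and omit it.
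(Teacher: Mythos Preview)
Your argument is correct. The paper omits the proof entirely, declaring it ``standard'', so there is nothing to compare against; your Rayleigh-quotient comparison with re-centring to accommodate the weight-dependent orthogonality constraint is precisely the kind of routine verification the authors had in mind. The only place worth tightening is the bookkeeping: the re-centring correction to the denominator is $c_n^2\int\omega_n = O(\eta_n^2)\|u\|_{L^2}^2$ (quadratic, not linear, in $\eta_n$), while the weight comparison contributes the multiplicative $(1\pm\eta_n/\varepsilon)$; combining and using $\int u^2\omega \geq \varepsilon\|u\|_{L^2}^2$ gives $\lambda_1(\omega_n) \leq (1-\eta_n/\varepsilon)^{-2}\lambda_1(\omega)$, and the symmetric estimate then yields convergence.
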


In fact we can expect convergence in a much stronger sense, but this is all we will need.

\begin{proof}[Proof of Proposition~\ref{prop:weighted}]
Every finite step function $\rho \in L^\infty(0,D)$ taking on only positive integer values corresponds to a pumpkin chain; hence the proposition reduces to showing that we can find a sequence of functions $\rho_n\in W$ with $\lambda_1(\rho_n) \to \lambda_1 (\omega)$, with $\omega$ as in the statement of the proposition, since the closure of $W$ in the $L^\infty$-norm contains $\{ \omega \in C^1[0,D]: \min\omega > 0 \}$, as an elementary (and omitted) approximation argument shows. But this follows from Lemma~\ref{lem:weighted-conv}.
\end{proof}

\subsection*{Counterexample}

We are finally in a position to show that no upper estimate on the spectral gap is possible in terms of the diameter alone. Our result is non-constructive, but it should not be hard to write down a family of examples explicitly based on our proof: a sequence of pumpkin chains, where the pumpkins become increasingly small, and in each pumpkin chain the number of slices of the successive pumpkins increases exponentially from $\mv_0$ to $\mv_D$.

\begin{theorem}\label{th:d-upper}
Given $D>0$, there exists a sequence of pumpkin chains $\mG_n$ with $D(\mG_n) = D$ for all $n\in\N$, but $\lambda_1(\mG_n) \to \infty$.
\end{theorem}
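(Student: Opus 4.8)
By Proposition~\ref{prop:weighted} together with Lemma~\ref{lem:weighted-conv}, it suffices to exhibit a sequence of weights $\omega_n \in C^1[0,D]$ with $\min \omega_n > 0$ for which $\lambda_1(\omega_n) \to \infty$; the pumpkin chains are then recovered by approximation. So the whole problem is reduced to a one-dimensional Sturm--Liouville question: can the first nontrivial Neumann eigenvalue of $-(\omega u')' = \lambda \omega u$ on $(0,D)$ be made arbitrarily large by choosing $\omega$ suitably? The answer is yes, and the mechanism is to make $\omega$ grow extremely rapidly --- say $\omega_n(x) = e^{n x}$, or more generally $\omega_n = e^{c_n x}$ with $c_n \to \infty$. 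Intuitively, the weight $\omega$ in the denominator of \eqref{eq:weighted-rq} concentrates the $L^2$-mass near $x = D$, and the mean-zero constraint $\int_0^D u\,\omega\,dx = 0$ then forces any competitor $u$ to oscillate, which is expensive in the numerator $\int_0^D (u')^2 \omega\,dx$.

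\textbf{Key steps.} First I would fix the explicit choice $\omega_n(x) = e^{n x}$ and write out \eqref{eq:weighted} for it: the equation becomes $-u'' - n u' = \lambda u$ on $(0,D)$ with $u'(0) = u'(D) = 0$. Substituting $u(x) = e^{-nx/2} v(x)$ transforms this into the constant-coefficient problem $-v'' = (\lambda - n^2/4) v$, though with $x$-dependent Robin-type conditions coming from the Neumann conditions on $u$; a slightly cleaner route is to observe directly from \eqref{eq:weighted-rq} that for any $u \in H^1(0,D)$ with $\int_0^D u\, e^{nx}\,dx = 0$ one has the lower bound
\begin{equation*}
\frac{\int_0^D (u')^2 e^{nx}\,dx}{\int_0^D u^2 e^{nx}\,dx} \;\geq\; c\,n^2
\end{equation*}
for some absolute constant $c > 0$ and all large $n$. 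To prove this Poincaré-type inequality with the sharp $n^2$ scaling, I would rescale: setting $y = nx$ maps $(0,D)$ to $(0,nD)$ and turns the quotient into $n^2$ times the analogous quotient for the weight $e^y$ on the growing interval $(0,nD)$, with the same mean-zero constraint. It then remains to show that the first nontrivial Neumann eigenvalue for the weight $e^y$ on $(0,\ell)$ stays bounded below by a positive constant as $\ell \to \infty$ --- equivalently, that $\lambda_1(e^y; (0,\infty))$, the bottom of the spectrum above $0$ for $-(e^y u')' = \lambda e^y u$ on the half-line, is positive. This last fact is classical: the substitution $u = e^{-y/2} v$ turns it into $-v'' + \tfrac14 v = \lambda v$, whose spectrum is $[\tfrac14, \infty)$, so $\lambda_1 = \tfrac14$; one must only check that the constant eigenfunction at $\lambda = 0$ is genuinely isolated, i.e. that $\lambda = 0$ does not belong to the essential spectrum, which follows because $e^{-y/2}\cdot 1 = e^{-y/2}$ is $L^2$ on $(0,\infty)$ and the potential $\tfrac14$ is a nonnegative constant keeping the rest of the spectrum at height $\geq \tfrac14$.

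\textbf{Assembling the argument.} Combining the rescaling with the half-line bound gives $\lambda_1(e^{nx}; (0,D)) \geq c\, n^2 \to \infty$, where one is slightly careful that truncating the half-line problem to $(0, nD)$ can only raise the first eigenvalue (by the same monotonicity-under-restriction idea used repeatedly in the paper, or by a direct extension-of-test-functions argument). Since $\omega_n = e^{nx}$ lies in $C^1[0,D]$ with $\min \omega_n = 1 > 0$, Proposition~\ref{prop:weighted} produces, for each $n$, a pumpkin chain $\mG_n^{(k)}$ of diameter $D$ with $\lambda_1(\mG_n^{(k)}) \to \lambda_1(\omega_n)$ as $k \to \infty$; a diagonal choice $\mG_n := \mG_n^{(k(n))}$ with $k(n)$ large enough yields $\lambda_1(\mG_n) \to \infty$ while $D(\mG_n) = D$ for all $n$, which is the assertion.

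\textbf{Main obstacle.} The only genuinely nontrivial point is the sharp $n^2$ lower bound on $\lambda_1(e^{nx}; (0,D))$ --- more precisely, verifying that the first nontrivial eigenvalue for the weight $e^y$ does not decay to $0$ as the interval length grows. This is where the mean-zero constraint must be used essentially: without it the infimum would be $0$ (take $u$ constant). The cleanest way to nail it down is the half-line/Schrödinger reformulation sketched above, noting that $\inf \sigma(-v'' + \tfrac14 v) \setminus \{0\}$ on the half-line with Neumann condition equals $\tfrac14$ and that $0$ is a simple isolated eigenvalue; everything else in the proof is soft (rescaling, monotonicity, and the already-established reduction machinery of Section~\ref{sec:diam}).
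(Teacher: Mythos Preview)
Your overall strategy coincides with the paper's: reduce to weights via Proposition~\ref{prop:weighted}, take $\omega_n(x)=e^{nx}$, and show $\lambda_1(\omega_n)\to\infty$. You also correctly identify the substitution $u=e^{-nx/2}v$ as the natural tool. But you then abandon this route in favour of a rescaling/half-line argument, and that detour contains genuine gaps.

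First, the half-line weighted problem you invoke is ill-posed as written: $e^y$ has infinite mass on $(0,\infty)$, so neither the mean-zero constraint $\int_0^\infty u\,e^y\,dy=0$ nor the weighted $L^2$-norm makes sense, and the asserted ``equivalence'' between $\liminf_{\ell\to\infty}\lambda_1(e^y;(0,\ell))$ and a half-line spectral gap is unjustified. Second, in the Schr\"odinger picture your substitution gives $v=e^{y/2}u$, so the constant $u\equiv 1$ corresponds to $v=e^{y/2}$, which is \emph{not} in $L^2(0,\infty)$; thus $\lambda=0$ is not an eigenvalue of the half-line operator at all (its spectrum is purely $[\tfrac14,\infty)$), and your check that ``$0$ is an isolated eigenvalue because $e^{-y/2}\in L^2$'' rests on a sign error. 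Third, the monotonicity claim ``truncating the half-line can only raise the first eigenvalue'' has nothing to compare against (there is no half-line eigenvalue below $\tfrac14$), and the extension-of-test-functions heuristic fails because any extension of a finite-interval competitor to $(0,\infty)$ destroys the (divergent) mean-zero condition.

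The paper avoids all of this by staying on the \emph{finite} interval: the substitution $w=e^{nx/2}u$ turns $-u''-nu'=\lambda u$, $u'(0)=u'(D)=0$ into $-w''=\mu w$ with $\mu=\lambda-n^2/4$ and the constant (not $x$-dependent) Robin conditions $w'(0)=\tfrac{n}{2}w(0)$, $w'(D)=\tfrac{n}{2}w(D)$. A short ODE check shows this Robin problem has exactly one non-positive eigenvalue, namely $\mu_0=-n^2/4$ with eigenfunction $e^{nx/2}$, so $\mu_1>0$ and hence $\lambda_1(\omega_n)>n^2/4\to\infty$. This is precisely the computation you sketched and set aside; carrying it out is both simpler and rigorous, whereas the half-line route would need substantial repair.
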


\begin{proof}
Proposition~\ref{prop:weighted} together with a diagonal argument shows that this follows from the existence of a sequence of weights $\omega_n \in C^1[0,D]$ with $\lambda_1(\omega_n) \to \infty$. We make the explicit choice $\omega_n(x):= e^{nx}$, then it follows from \eqref{eq:weighted} and a short calculation that $\lambda_1(\omega_n)$ is the lowest non-zero eigenvalue of
\begin{equation}\label{eq:sturml}
\left\{
\begin{aligned}
	- u''(x) - n u'(x) &= \lambda u(x), \qquad x \in (0,D),\\
		u'(0) = u'(D) &= 0.
\end{aligned}
\right.
\end{equation}
We claim that all non-trivial eigenvalues $\lambda_k(\omega_n)$ of the above problem are bounded from below by $n^2/4$, from which the assertion of the theorem will follow. To see this we use the substitution $w(x):=e^{\frac{n}{2}x} u(x)$ to transform \eqref{eq:sturml} into
\begin{displaymath}
\left\{
\begin{aligned}
	- w''(x) & = \mu w(x) ,\qquad \mu =  \lambda - \frac{n^2}{4},  \qquad x \in (0,D),\\
	w'(0) &= \frac{n}{2} w(0), \,\, w'(D) = \frac{n}{2} w(D).
\end{aligned}
\right.
\end{displaymath}
An elementary calculation shows that every eigenvalue $\mu$ of this problem is at least $- \frac{n^2}{4} $; the unique eigenfunction corresponding to $ \mu_0 = - \frac{n^2}{4}   $ is $w(x)=e^{\frac{n}{2}x}$. 
The next eigenvalue $ \mu_1 $ is positive, meaning $\lambda_1(\omega_n) \geq n^2/4$.
\end{proof}

\begin{remark}
\label{rem:d-global}
The property $\lambda_1(\mG_n) \to \infty$ is a global property of the sequence of graphs, as can be seen as follows: create a new graph $\tilde\mG_n$ out of the graph $\mG_n$ from Theorem~\ref{th:d-upper} by attaching to its end vertex $\mv_D$ a single pendant edge of fixed length $\ell>0$. Then $D(\tilde\mG_n) = D+\ell$ and the spectral gap does not blow up any more; in fact $\lambda_1(\tilde\mG_n) \leq \pi^2/\ell^2$ for all $n\in\N$ by Lemma~\ref{lem:principles}.(1) and \eqref{lambda1path}.
\end{remark}

\subsection{Applications: the radius and the longest cycle of a graph} \label{sec:rad-cycle}

One could say that the principles set out in the preceding section are more general than simply applying to the quantity diameter. For example, in a natural analogy with the radius of a combinatorial graph, cf.~\cite[\S~1.3]{Die05}, we may define the \emph{radius} of a quantum graph $\mG$ by
\begin{displaymath}
	R(\mG ):= \inf_{x \in \mG } \, \sup_{y \in \mG} \, \dist(x,y),
\end{displaymath}
where as usual $\dist$ denotes the shortest (Euclidean) distance between points along paths within the graph, and as with diameter, infimum and supremum are taken over all points in the graph, not just vertices. This is fairly closely related to diameter, and indeed for the flower dumbbells in Example~\ref{ex:d-lower}, the combs in Remark~\ref{rem:combs} and all pumpkin chains, we have that $R(\mG)=D(\mG)/2$. In particular, radius alone leads to neither a lower nor an upper bound on $\lambda_1(\mG)$.

As another example, the \emph{length of a longest cycle} of a graph $\mG$ also yields neither a lower nor an upper bound on $\lambda_1(\mG)$ (unlike the length of the longest edge).
For the lower bound, if we take the sequence of graphs $\mG_n$ from Example~\ref{ex:d-lower} and modify each by inserting a second edge $\me_{00}$ of length $D/2$ between $\mv_1$ and $\mv_{-1}$, then we obtain a sequence of graphs having a longest cycle of length $D$ and spectral gap tending to zero, as we can see by extending $\psi_n$ by another sine curve in the obvious way on $\me_{00}$. For the upper bound, take a sequence of pumpkin chains $\mG_n$ as in Theorem~\ref{th:d-upper} and identify for each $n$ the two end vertices $\mv_0$ and $\mv_D$ to form a new graph $\mG_n'$ having a longest cycle of length $D$ and $\lambda_1(\mG_n') \geq \lambda_1(\mG_n)$ by Lemma~\ref{lem:principles}.(3).

\section{Estimates involving the diameter} \label{sec:diam-pos}

If beyond the diameter we place additional restrictions on our graphs, we can recover bounds. An interesting fact is that, unlike in the case of length (Example~\ref{ex:lev-lower}), bounding $V$ from above is now, in conjunction with $D$, enough to obtain an upper bound.

\begin{theorem}
\label{th:dv}
Let $\mG$ be a quantum graph having diameter $D>0$ and $V \geq 2$ vertices. Then
\begin{equation}
\label{eq:dv}
\lambda_1(\mG) \leq \frac{\pi^2}{D^2}(V+1)^2\ ,
\end{equation}
and even
\begin{equation}
\label{eq:dv-pu}
\lambda_1(\mG) \leq \frac{\pi^2}{D_\mV^2}(V-1)^2
\end{equation}
where $D_\mV$ is the combinatorial diameter of $\mG$ introduced in Remark~\ref{rem:vertex-d}.
\end{theorem}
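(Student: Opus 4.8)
The plan is to reduce to pumpkin chains and then dispose of that case in one step. By Lemma~\ref{lem:reduction} there is a pumpkin chain $\tilde\mG$ with $D(\tilde\mG)=D(\mG)$, $\tilde V:=V(\tilde\mG)\le V(\mG)+2$ and $\lambda_1(\tilde\mG)\ge\lambda_1(\mG)$; and by the variant recorded in Remark~\ref{rem:vertex-d} there is a (possibly different) pumpkin chain $\tilde\mG$ with $D_\mV(\tilde\mG)=D_\mV(\mG)$, $\tilde V:=V(\tilde\mG)\le V(\mG)$ and $\lambda_1(\tilde\mG)\ge\lambda_1(\mG)$. Since the diameter of a pumpkin chain is attained by its two end vertices we have $D(\tilde\mG)=D_\mV(\tilde\mG)$ in both cases, so both \eqref{eq:dv} and \eqref{eq:dv-pu} follow once we show that every pumpkin chain $\tilde\mG$ satisfies
\begin{equation*}
\lambda_1(\tilde\mG)\le\frac{\pi^2(\tilde V-1)^2}{D(\tilde\mG)^2}.
\end{equation*}

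To this end, label the vertices of $\tilde\mG$ as $\mv_0,\mv_1,\dots,\mv_{\tilde V-1}$ in the order in which they occur along the chain, and denote by $P_k$ the pumpkin joining $\mv_{k-1}$ to $\mv_k$ for $k=1,\dots,\tilde V-1$. By Definition~\ref{def:hubgraph} all edges of $P_k$ share one length $\ell_k>0$, and a shortest path from $\mv_0$ to $\mv_{\tilde V-1}$ uses exactly one edge of each $P_k$; hence $D(\tilde\mG)=D_\mV(\tilde\mG)=\sum_{k=1}^{\tilde V-1}\ell_k$, and by the pigeonhole principle there is an index $j$ with $\ell_j\ge D(\tilde\mG)/(\tilde V-1)$. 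The sub-chain $P_1\cup\dots\cup P_{j-1}$ is attached to the rest of $\tilde\mG$ only at $\mv_{j-1}$, and $P_{j+1}\cup\dots\cup P_{\tilde V-1}$ is attached only at $\mv_j$, so $\tilde\mG$ is obtained from the single pumpkin $P_j$ by attaching (at most) these two pendant graphs. Two applications of Lemma~\ref{lem:principles}.(1) give $\lambda_1(\tilde\mG)\le\lambda_1(P_j)$, and since $P_j$ is an equilateral pumpkin---or a path, if it has a single edge---all of whose edges have length $\ell_j$, \eqref{lambda1pumpkin} (respectively \eqref{lambda1path}) gives $\lambda_1(P_j)=\pi^2/\ell_j^2$. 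Combining with the pigeonhole bound, $\lambda_1(\tilde\mG)\le\pi^2/\ell_j^2\le\pi^2(\tilde V-1)^2/D(\tilde\mG)^2$; substituting back the two reductions yields \eqref{eq:dv} and \eqref{eq:dv-pu}.

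Once Lemma~\ref{lem:reduction} is available there is no real obstacle: the two points deserving a moment of care are the structural observation that deleting the chain on either side of the longest pumpkin $P_j$ indeed amounts to removing pendant graphs (so that Lemma~\ref{lem:principles}.(1) applies), and the bookkeeping of which reduction --- diameter-preserving with $V\mapsto V+2$, or $D_\mV$-preserving with $V\mapsto V$ --- feeds which of the two inequalities. An alternative to the surgery in the second paragraph is to argue through the one-dimensional reduction \eqref{eq:1D}: there the weight $\rho$ is a step function with $\tilde V-1$ constant pieces on $[0,D]$, and a rescaled half-cosine carried by the longest piece --- extended by the constants $+1$ and $-1$ on the two sides so that its $\rho$-weighted mean stays small, and then recentred to mean zero --- has Rayleigh quotient at most $\pi^2/\ell_j^2$; this costs one short computation, whereas the surgery argument costs none.
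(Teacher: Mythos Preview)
Your proof is correct. Both the paper and you reduce to pumpkin chains via Lemma~\ref{lem:reduction} (and Remark~\ref{rem:vertex-d}) and then use pigeonhole to locate a long section of the chain. The difference lies in how the pumpkin-chain bound $\lambda_1(\tilde\mG)\le\pi^2(\tilde V-1)^2/D(\tilde\mG)^2$ is established. The paper constructs an explicit test function: a scaled cosine on the long section $[a,b]$, extended by constants $\alpha$ and $-\beta$ on the two sides with $\alpha,\beta>0$ chosen so that the mean vanishes, and then estimates the Rayleigh quotient. You instead observe that the two sub-chains flanking the longest pumpkin $P_j$ are pendant graphs attached at $\mv_{j-1}$ and $\mv_j$, strip them off with two applications of Lemma~\ref{lem:principles}.(1), and read off $\lambda_1(P_j)=\pi^2/\ell_j^2$ directly from \eqref{lambda1pumpkin} (or \eqref{lambda1path}). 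This surgery route is cleaner: it sidesteps the balancing of the constants and the Rayleigh-quotient computation altogether, at the modest cost of invoking the pumpkin eigenvalue formula. The alternative you sketch in your final paragraph---a cosine on the longest step of the weight $\rho$, extended by constants and recentred---is essentially the paper's argument recast through \eqref{eq:1D}.
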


\begin{proof}
In order to prove \eqref{eq:dv} it suffices to prove that
\begin{equation}
\label{eq:dv-pc}
	\sup \left\{\lambda_1(\mG): \mG \text{ is a pumpkin chain, } D(\mG) = D \text{ and } V(\mG)\leq V\right\} 
	\leq \frac{\pi^2}{D^2}(V-1)^2,
\end{equation}
since by Lemma~\ref{lem:reduction} for any graph $\mG$ we can find a pumpkin chain $\tilde\mG$ with $D(\tilde\mG)=D(\mG)$, $V(\tilde\mG)\leq V(\mG)+2$ and $\lambda_1(\mG)\leq\lambda_1(\tilde\mG)$; \eqref{eq:dv-pu} will follow from the same statement, appealing to Remark~\ref{rem:vertex-d}. We establish this via a somewhat crude test function argument. By the pigeonhole principle, at least two vertices $\mv,\mw$ of $\mG$, say at levels $a=S(\mv)<b=S(\mw) \in [0,D]$, must satisfy ${\dista}(\mv,\mw) = b-a \geq D/(V-1)$. Roughly speaking, we build a scaled cosine function on $(a,b)$ and extend this by $\pm 1$ to the rest of the graph. More precisely, we set
\begin{displaymath}
\psi(z):=
\begin{cases}
\alpha \qquad &\text{if $x=S(z) \in [0,a)$}\\
\alpha \cos(\pi (x-a)/(b-a)) \qquad &\text{if $x =S(z) \in [a,(a+b)/2]$}\\
\beta \cos(\pi (x-a)/(b-a)) &\text{if $x=S(z) \in [(a+b)/2,b]$}\\
-\beta &\text{if $x\in (b,D]$}
\end{cases}
\end{displaymath}
for all $z \in \mG$, where $x=S(z)$ throughout, and the constants $\alpha,\beta>0$ are chosen so that $\int_\mG \psi = 0$. It is easy to check that $\psi$ has Rayleigh quotient no larger than $\pi^2/(b-a)^2 \leq \pi^2(V-1)^2/D^2$, establishing \eqref{eq:dv-pc} and hence \eqref{eq:dv}.
\end{proof}

\begin{remark}
(a) A bound on the diameter $D$ is stronger than one on the total length $L$ (as $D \leq L$), but on the other hand a bound on the number $V$ of vertices is weaker than one on the number $E$ of edges, as an upper bound on $E$ implies one on $V$. Comparing Theorem~\ref{th:dv} with Theorem~\ref{th:le} seems therefore to be interesting. We recall (Example~\ref{ex:d-lower}) that $D$ and $V$ together are not enough for a lower bound.

(b) We do not expect the bounds in Theorem~\ref{th:dv} to reflect the optimal growth of $\lambda_1$ with respect to $V$; for example, we conjecture that $\left\{\lambda_1(\mG): D_\mV (\mG) = D_\mV \text{ and } V(\mG)\leq 4\right\}$ is equal to $4\pi^2/D_\mV^2$. We also expect that in general such optima can only be approximated and never attained, since (roughly speaking) one can always add more edges to such a graph without altering $D$ and $V$, but if added between the right vertices they should speed up diffusion on the graph.

(c) The bounds
\begin{equation}
\label{eq:disc-diam}
\frac{4}{VD_\mV \deg_{\max}}\le \alpha_1 \le \frac{1}{\deg_{\max{}}\deg_{\min{}}}\frac{4\log_2^2 V}{D_\mV^2}
\end{equation}
on the spectral gap $\alpha_1$ of the normalised Laplacian of a combinatorial graph in terms of combinatorial diameter $D_\mV$ and number of vertices are a direct consequence of those obtained in~\cite[Thm.~2.7]{AloMil85} and~\cite[Thm.~4.2]{Moh91b} for the unnormalised Laplacian, cf.\ Remark~\ref{rem:cannot}(c) for the definition of $\deg_{\max{}},\deg_{\min{}}$. As to be expected, these bounds are a positive function of $V$ and an inverse function of $D_\mV$. Interestingly, the lower bound in \eqref{eq:disc-diam} requires a combination of the maximal degree $\deg_{\max{}}$ together with $V$; the nature of the counterexamples in Example~\ref{ex:d-lower} suggests that the same may be true of the continuous case.
\end{remark}

\begin{remark}\label{rem:de}
Fixing the diameter $D$ and bounding the number of edges $E$ from above is enough to yield non-trivial upper and lower bounds on $\lambda_1(\mG)$. Indeed, since at least one edge must have length greater than or equal to $D/E$, we obtain the crude upper bound 
\[
\lambda_1(\mG)\leq \frac{4\pi^2 E^2}{D^2} ,
\]
while since $L \leq DE$, \eqref{eq:l-lower} implies the lower bound
\[
\lambda_1(\mG)\geq \frac{\pi^2}{D^2 E^2},
\]
with equality if $\mG$ is a loop in the first case and a path in the second. Both these bounds are however in general probably far from optimal. Since $\lambda_1(\mG)$ depends continuously on changes in the length of any given edge of $\mG$, even as the edge length tends to zero, it seems likely that there should exist maximisers and minimisers for each $D>0$ and $E\geq 1$ (as opposed to only a maximising sequence as in Theorem~\ref{th:dv}); we expect that our current bounds are only sharp if $E=1$. However, we leave this as an open problem, together with the much harder problem of determining the optimal constants and optimisers for each $E\geq 2$.
\end{remark}

\section{Estimates in terms of both the total length and the diameter} \label{sec:diam-length}

We will now prove bounds on the eigenvalue in terms of $D$ and $L$. We will start with the upper bound, which is the easier case.

\subsection{An upper bound in terms of $D$ and $L$.}

Interesting in the following upper bound is that the bound is a \emph{positive} function of $L$, a consequence of $L$ representing a finiteness condition on $\mG$ as a counterpart to $D$; it is entirely possible that there could be another upper bound on $\lambda_1$ which depends inversely on $L$ and positively on $D$.

Note that our upper estimate is again probably far from optimal, as we obtain it using another somewhat course test function argument, although it is ``sharp'' in the sense that we have equality when $L=D$, i.e., when $\mG$ is a path.

\begin{theorem}
\label{th:dl-upper}
Any quantum graph $\mG$ satisfying Assumption~\ref{ass:graph} and having diameter $D>0$ and total length $L \geq D$ satisfies
\begin{equation}
\label{eq:dl-upper}
	\lambda_1(\mG) \leq \frac{\pi^2}{D^2} \ \frac{4L-3D}{D}\ .
\end{equation}
\end{theorem}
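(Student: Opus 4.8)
The plan is to reduce to a pumpkin chain and then to the one-dimensional weighted problem \eqref{eq:1D}, and to construct an explicit test function $u$ on $[0,D]$ with weight $\rho$. By Lemma~\ref{lem:reduction} it suffices to prove the bound for a pumpkin chain $\mG$ with $D(\mG)=D$ and $L(\mG)\leq L$; by \eqref{eq:1D} we then have $\lambda_1(\mG)$ expressed as the infimum of the weighted Rayleigh quotient over $u\in H^1(0,D)$ with $\int_0^D u\rho=0$, where $\rho=\rho(x)=\#S^{-1}\{x\}$ is a positive-integer-valued step function with $\int_0^D \rho(x)\,\textrm{d}x = L(\mG) \leq L$ (this last identity because the total length is the sum over levels of the number of parallel edges). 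So the task becomes: given any $\rho\in L^\infty(0,D;\N\setminus\{0\})$ with $\int_0^D\rho\leq L$, produce a test function $u$ with mean zero against $\rho$ whose weighted Rayleigh quotient is at most $\frac{\pi^2}{D^2}\cdot\frac{4L-3D}{D}$.

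The key step is the test function. Since $\rho\geq 1$ everywhere, the ``mass'' $\int_0^D\rho$ is concentrated on a set whose measure can be small only where $\rho$ is large; the factor $4L-3D$ strongly suggests a function that is constant equal to $\pm 1$ on two subintervals near the endpoints $0$ and $D$ and does a half-cosine transition in the middle, with the transition region's length and location chosen using $L$. Concretely I would take $u$ to equal $+1$ on $[0,s]$, a scaled cosine $\cos(\pi(x-s)/\ell)$ on $[s,s+\ell]$ (so $u$ runs monotonically from $1$ down to $-1$), and $-1$ on $[s+\ell, D]$, with $s+\ell\leq D$, then replace $u$ by $u-c$ where $c=(\int u\rho)/(\int\rho)$ is its weighted mean; subtracting a constant only decreases the numerator (it is unchanged) relative to the denominator (it can only grow), exactly as in the proofs of Lemma~\ref{lem:principles}.(1) and Theorem~\ref{th:dv}, so it suffices to bound the Rayleigh quotient of $u$ itself \emph{before} centering, using that the numerator is supported on the transition interval. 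The numerator is $\int_s^{s+\ell}(\pi/\ell)^2\sin^2(\pi(x-s)/\ell)\rho(x)\,\textrm{d}x$; the worst case for $\rho$ on the transition interval, subject to the global budget $\int_0^D\rho\leq L$ and $\rho\geq 1$, is to put as much weight as possible there, i.e.\ essentially all of $L-(D-\ell)$ extra units of $\int\rho$ above the baseline $\ell$ can sit in the transition region, giving numerator $\lesssim (\pi/\ell)^2\big(\ell + (L-D+\ell)\big)$ roughly, while the denominator is at least $\int_{[0,s]\cup[s+\ell,D]}\rho\geq D-\ell$ (from the baseline $\rho\geq 1$ on the flat parts). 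Optimising the choice of $\ell$ — and tracking the constants in the $\sin^2$ integrals carefully — should yield precisely $\frac{\pi^2}{D^2}\cdot\frac{4L-3D}{D}$, with the extremal configuration being $\ell = D$ (no flat parts, $\rho\equiv 1$, i.e.\ the path) reproducing $\pi^2/D^2$ when $L=D$.

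The main obstacle I expect is making the ``worst-case $\rho$'' step rigorous and sharp: one has to argue that for the purposes of the upper bound it is legitimate to assume all the ``excess length'' $L-D$ is piled into the transition interval, and that the denominator cannot be made to help by concentrating $\rho$ on the flat parts — the two effects compete, and the cleanest route is probably to fix the geometry of $u$ first, bound the numerator by $(\pi/\ell)^2\int\rho\mathbf{1}_{\text{transition}}$ and the denominator below by the flat-part mass $\geq D-\ell$ (discarding the transition and centering contributions), then observe that $\int\rho\mathbf{1}_{\text{transition}}\leq L - (D-\ell)$, and finally choose $\ell$ to minimise $(\pi/\ell)^2\frac{L-D+\ell}{D-\ell}$ over $\ell\in(0,D)$. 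Getting the algebra of this one-variable minimisation to collapse to exactly $\frac{4L-3D}{D}$ (rather than merely an $O(L/D^3)$ bound) is where the care lies; a slightly cleverer amplitude split between the two flat parts, or putting the transition adjacent to one endpoint rather than symmetric, may be needed to hit the stated constant, and I would expect the authors' argument to make exactly such a specific, slightly asymmetric choice.
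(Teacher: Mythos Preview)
Your reduction to a pumpkin chain and then to the weighted one-dimensional problem \eqref{eq:1D} is exactly right, and the overall strategy --- an explicit level-dependent test function --- is the paper's as well. But the argument as written has a genuine gap.

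The claim that ``subtracting a constant only decreases the numerator (it is unchanged) relative to the denominator (it can only grow)'' is false: subtracting the weighted mean $c$ leaves the numerator unchanged but \emph{decreases} the denominator, since $\int (u-c)^2\rho = \int u^2\rho - c^2\int\rho \le \int u^2\rho$. Hence the Rayleigh quotient of $u-c$ is at least that of $u$, not at most, and ``bounding the Rayleigh quotient of $u$ before centering'' does not yield an upper bound on $\lambda_1$. The analogy with Lemma~\ref{lem:principles}(1) fails because there the original function already has mean zero on the smaller graph, so the added $c^2$-terms only enlarge the denominator; here your $u$ need not have small $\rho$-mean, and if $\rho$ is heavily concentrated on one plateau then $c$ is close to $\pm 1$ and the centred denominator can be made arbitrarily small. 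Your chain of inequalities therefore does not close, and indeed the one-variable optimisation in $\ell$ (even with careful $\sin^2$/$\cos^2$ bookkeeping) gives a strictly worse coefficient of $L$ than $4$.

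The paper fixes this by doing exactly what you guess in your last sentence: it uses two amplitudes rather than centering. The test function is $A\cos(\pi x/D)$ on $[0,D/2]$ and $B\cos(\pi x/D)$ on $[D/2,D]$ (continuous at $D/2$ since both pieces vanish there), with $A,B>0$ chosen so that $\int\psi\,\rho=0$ from the outset --- no flat plateaus, no subtraction. Bounding $\sin^2\le 1$ on the excess mass and $\cos^2\ge 0$ likewise gives
\[
\lambda_1 \;\le\; \frac{\pi^2}{D^2}\cdot\frac{(A^2+B^2)\tfrac{D}{4} + A^2\ell_{\mathscr L} + B^2\ell_{\mathscr R}}{(A^2+B^2)\tfrac{D}{4}}\,,
\]
where $\ell_{\mathscr L}+\ell_{\mathscr R}=L-D$ are the excess lengths in the two halves; estimating each by $L-D$ yields exactly $(4L-3D)/D$. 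The two free amplitudes absorb the mean-zero constraint at no cost to the denominator, which is precisely what makes the stated constant fall out.
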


\begin{proof}
Since the dependence of the right-hand side of \eqref{eq:dl-upper} on $L$ is positive, the claim will follow from Lemma~\ref{lem:reduction} if we can prove the corresponding statement for pumpkin chains. So assume that $\mG$ is indeed a pumpkin chain. Denoting as usual by $\mv_0$ and $\mv_D$ the terminal vertices of $\mG$, and by $S(z)=\dista(z,\mv_0)$ the level of the point $z \in \mG$, we construct a test function on $\mG$ by setting
\begin{displaymath}
	\psi(z) := 
\begin{cases}
A\cos\left(\frac{\pi S(z)}{D}\right) \qquad &\text{if $S(z)\leq D/2$}\\
B\cos\left(\frac{\pi S(z)}{D}\right) &\text{if $S(z)> D/2$}
\end{cases}
\end{displaymath}
for all $z \in \mG$, where $A$, $B>0$ are chosen to ensure $\int_\mG \psi = 0$. Denote by 
\[
\ell_\mathscr{L} := |\{z \in \mG: S(z)\leq D/2\}|- D/2
\]
and 
\[
\ell_\mathscr{R} := |\{z \in \mG: S(z)> D/2\}|- D/2
\]
the total extra length of $\mG$ not accounted for by the first path $\Gamma_1$ in $\mG$ realising the diameter in the left and the right half of $\mG$, respectively. Then $\ell_\mathscr{L} + \ell_\mathscr{R} = L-D$ by definition, and, estimating $\psi^2$ from below by zero and $(\psi')^2$ from above by $ A^2 $ and $ B^2 $ on the corresponding part(s) of $\mG$, we have
\begin{displaymath}
\begin{aligned}
\lambda_1(\mG) &\leq
\frac{\pi^2}{D^2}\ \frac{A^2\int_0^{D/2} \sin^2(\pi x/D)\,\textrm{d}x + A^2 \ell_\mathscr{L} + B^2 \int_{D/2}^{D} \sin^2(\pi x/D)\,\textrm{d}x + B^2 \ell_\mathscr{R}}{A^2\int_0^{D/2} \cos^2(\pi x/D)\,\textrm{d}x + B^2 \int_{D/2}^{D} \cos^2(\pi x/D)\,\textrm{d}x}\\
&= \frac{\pi^2}{D^2}\ \frac{(A^2+B^2)\frac{D}{4} + A^2 \ell_\mathscr{L} + B^2 \ell_\mathscr{R}}{(A^2+B^2)\frac{D}{4}}.
\end{aligned}
\end{displaymath}
Estimating both $\ell_\mathscr{L}$ and $\ell_\mathscr{R}$ from above by $L-D$ and rearranging yields \eqref{eq:dl-upper}.
\end{proof}

\subsection{A lower bound in terms of $D$ and $L$}

Finally, we will give a lower bound. It would appear that the minimum is given by the solution of a transcendental equation, which represents the first non-zero eigenvalue of a second-order problem on an interval with a non-standard boundary condition, in which the operator itself appears. This is often called a generalised Wentzell, or Wentzell--Robin, condition in the literature; see for example~\cite{AreMetPal03,MugRom07}.

\begin{theorem}
\label{th:dl-lower}
For any quantum graph $\mG$ satisfying Assumption~\ref{ass:graph} and having diameter $D>0$ and total length $L\geq 2D$, the first eigenvalue $\lambda_1(\mG)$ is at least as large as $\kappa^2$, where $\kappa>0$ is the smallest positive solution of the transcendental equation
\begin{equation}
\label{eq:dl-trans-2}
	\cos(2\kappa D) = (L-2D)\kappa\sin(2\kappa D).
\end{equation}
In particular,
\begin{equation}
\label{eq:dl-lower}
\lambda_1(\mG) \geq \frac{1}{2D(L-D)} > \frac{1}{2DL}.
\end{equation}
\end{theorem}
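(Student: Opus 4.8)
The plan is to reduce, as in all the other diameter bounds, to a one-dimensional weighted problem and then to extract a lower bound for the weighted Rayleigh quotient that is insensitive to the shape of the weight $\rho$ beyond its total mass. First I would invoke Lemma~\ref{lem:reduction}: since the right-hand side of \eqref{eq:dl-trans-2}--\eqref{eq:dl-lower} is decreasing in $L$ (a larger total length only weakens the bound), and the associated pumpkin chain $\tilde\mG$ has $D(\tilde\mG)=D(\mG)$ and $L(\tilde\mG)\le L(\mG)$ while $\lambda_1(\tilde\mG)\ge\lambda_1(\mG)$, it suffices to prove the bound for a pumpkin chain $\mG$. Then, by Lemma~\ref{lemma:level} and \eqref{eq:1D}, $\lambda_1(\mG)$ equals the infimum over $u\in H^1(0,D)$ with $\int_0^D u\rho\,\mathrm{d}x=0$ of $\int_0^D (u')^2\rho\,\mathrm{d}x / \int_0^D u^2\rho\,\mathrm{d}x$, where $\rho$ is the edge-counting weight, with $\rho(x)\ge 1$ for all $x$ and $\int_0^D\rho(x)\,\mathrm{d}x = L$.

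The heart of the matter is a lower bound for this weighted eigenvalue. Let $\varphi$ be the weighted eigenfunction, normalised so that $\int_0^D\varphi^2\rho\,\mathrm{d}x = 1$, with $\int_0^D\varphi\rho\,\mathrm{d}x=0$; write $\lambda=\lambda_1(\mG)=\int_0^D(\varphi')^2\rho\,\mathrm{d}x$. The idea is that $\varphi$ cannot be too ``flat'': since its weighted mean is zero and its weighted $L^2$ norm is one, it must oscillate, and its oscillation is controlled by $\int(\varphi')^2\rho$, but the weight can concentrate the oscillation into a short subinterval only at the cost of raising that Dirichlet energy. Concretely, I would look for the worst case by a Lagrange/Euler--Lagrange argument on the one-dimensional problem: fixing $D$ and the total mass $L=\int_0^D\rho$, minimise $\lambda_1(\rho)$ over admissible weights. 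The extremal weight should be a two-value step function — mass $1$ on a long stretch and a large mass concentrated near one endpoint (or split to the two endpoints) — which is precisely the heuristic in the counterexample discussion and in Remark~\ref{rem:d-global}: a heavy ``reservoir'' at the end plays the role of a Wentzell boundary term. Passing to the limit where that heavy mass collapses to the boundary point $x=D$ (say mass $L-D$ there, and $\rho\equiv 1$ on most of $[0,D]$, and perhaps splitting symmetrically to both ends with mass $(L-2D)/2$, which is where the factor $2D$ and the condition $L\ge 2D$ enter) converts the weighted eigenvalue problem into the ODE $-w''=\kappa^2 w$ on an interval of length $2D$ with the generalised Wentzell conditions $w'(0) = -(L-2D)\kappa^2 w(0)$ type at the ends; solving $w(x)=\cos(\kappa x)$ shifted appropriately and matching the boundary data yields exactly the transcendental equation \eqref{eq:dl-trans-2}, whose smallest positive root $\kappa$ gives the claimed $\lambda_1(\mG)\ge\kappa^2$. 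The explicit bound \eqref{eq:dl-lower} then follows by a one-line estimate: from \eqref{eq:dl-trans-2}, using $\cos(2\kappa D)\ge 1-2\kappa^2D^2$ and $\sin(2\kappa D)\le 2\kappa D$ for small $\kappa$, one gets $1 - 2\kappa^2 D^2 \le (L-2D)\kappa\cdot 2\kappa D$, i.e.\ $1 \le 2\kappa^2 D^2 + 2\kappa^2 D(L-2D) = 2\kappa^2 D(L-D)$, hence $\kappa^2 \ge \frac{1}{2D(L-D)} > \frac{1}{2DL}$.

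I would make the reduction to the extremal weight rigorous not by solving a shape-optimisation problem directly but by a direct test-function-free argument on the weighted quotient: given any admissible $\rho$ and the eigenfunction $\varphi$, I would compare with the explicit solution on the ``straightened'' configuration. One clean route: since $\rho\ge 1$ everywhere, for any $u$ one has $\int_0^D(u')^2\rho\,\mathrm{d}x \ge \int_0^D (u')^2\,\mathrm{d}x$, and the total excess mass $\int_0^D(\rho-1)\,\mathrm{d}x = L-D$ is bounded; one then estimates $\int u^2\rho$ from above by decomposing $\rho = 1 + (\rho-1)$ and bounding the contribution of the excess mass by its value at the point(s) where $u^2$ is largest. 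Carrying this out carefully — choosing coordinates so that the excess mass is pushed to where it hurts most, which is exactly where $\varphi$ attains its extrema near the endpoints — produces the comparison inequality $\lambda_1(\mG)\ge\kappa^2$ with $\kappa$ as above. \emph{The main obstacle I anticipate} is precisely this last rearrangement step: justifying that among all weights with $\rho\ge 1$ and $\int\rho = L$ the spectral gap is minimised (in the limit) by placing all excess mass at the boundary, and that the relevant limit of eigenvalue problems is governed by the Wentzell-type condition leading to \eqref{eq:dl-trans-2}. This requires either a careful monotone-rearrangement or mass-transport argument for the weight, or else a self-contained variational estimate that short-circuits the optimisation; the condition $L\ge 2D$ strongly suggests the symmetric placement (half the excess at each end) is what one must analyse, and keeping track of which configuration is genuinely extremal — rather than merely an upper bound for the infimum — is the delicate point.
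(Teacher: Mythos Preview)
Your very first step is wrong, and everything downstream inherits the error. Lemma~\ref{lem:reduction} produces a pumpkin chain $\tilde\mG$ with $\lambda_1(\tilde\mG)\ge\lambda_1(\mG)$: the pumpkin chain has a \emph{larger} spectral gap than $\mG$. That is exactly what one needs for \emph{upper} bounds (and is how Theorems~\ref{th:dv} and~\ref{th:dl-upper} use it), but it is useless for a lower bound: proving $\lambda_1(\tilde\mG)\ge\kappa^2$ tells you nothing whatsoever about $\lambda_1(\mG)$ from below. Your monotonicity observation about $f(D,L)$ in $L$ does not rescue this; the two inequalities $\lambda_1(\tilde\mG)\ge f(D,L(\tilde\mG))\ge f(D,L(\mG))$ and $\lambda_1(\tilde\mG)\ge\lambda_1(\mG)$ simply do not combine to give $\lambda_1(\mG)\ge f(D,L(\mG))$. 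Consequently the whole Sturm--Liouville/weight-rearrangement programme you sketch, even if it could be carried out, would only bound $\lambda_1$ of the pumpkin chain, not of the original graph.

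The paper's route is genuinely different: it constructs a comparison graph with \emph{smaller} first eigenvalue. This is the class of symmetric star dumbbell (SSD) graphs, and the key Lemma~\ref{lem:ssd-comparison} shows that for any $\mG$ one can find an SSD graph $\mT$ with $\lambda_1(\mT)\le\lambda_1(\mG)$, $L(\mT)\le L(\mG)$ and $D(\mT)\le 2D(\mG)$. The proof works with the eigenfunction $\psi$ of $\mG$ directly: one cuts $\mG$ at the zero set of $\psi$ into nodal domains, picks the one of smaller mass, and transplants the values of $\psi$ along a shortest path to the maximum onto the handle of an SSD graph, extending by the maximum value on the stars. This is a test-function construction on the target graph, not a rearrangement of a weight, and it explains both the factor $2$ in front of $D$ in \eqref{eq:dl-trans-2} (one only controls $D(\mT)\le 2D$, since the nodal point need not be centrally located) and the hypothesis $L\ge 2D$. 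The limit $\ell\to 0$ of SSD graphs then yields the Wentzell-type equation, as in your intuition; your derivation of \eqref{eq:dl-lower} from \eqref{eq:dl-trans-2} via $\cos x\ge 1-x^2/2$ and $\sin x\le x$ is correct and matches the paper.
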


\begin{remark}\label{rem:lowerbd-dl}
(a) It is not clear if \eqref{eq:dl-trans-2} is optimal. It follows from Lemma~\ref{lem:ssd-computation} below that the optimal bound is not larger than the square of the first positive solution $\tilde\kappa>0$ of
\begin{equation}
\label{eq:dl-trans-1}
	\cos\left(\tilde\kappa \frac{ D}{2}\right) = \tilde\kappa \frac{L-D}{2} \sin\left(\tilde\kappa \frac{D}{2}\right),
\end{equation}
and it is not hard to show that
\begin{displaymath}
	\frac{1}{DL} < \frac{1}{DL-\frac{D^2}{2}} \leq \tilde\kappa^2 \leq \frac{3}{3DL-2D^2} < \frac{3}{DL}
\end{displaymath}
(for the upper bound, use the estimate $\cot x \leq 1/x - x/3$ for $x \in (0,\pi)$; for the lower bound, argue as in the proof of Theorem~\ref{th:dl-lower}, with $D$ in place of $2D$). In particular, the dependence in Theorem~\ref{th:dl-lower} on $D$ and $L$ is of the correct form.

(b) If $D \geq L/2$, then it follows from \eqref{eq:l-lower} that
\begin{displaymath}
	\lambda_1(\mG) \geq \frac{\pi^2}{L^2} \geq \frac{\pi^2}{2DL}.
\end{displaymath}
We see that lower bounds involving $L$ and $D$ are of most interest when $D$ is much smaller than $L$, and in this case Theorem~\ref{th:dl-lower} is applicable.

(c) Both equations \eqref{eq:dl-trans-2} and \eqref{eq:dl-trans-1} arise by reducing a graph to a path graph of length $D$ with ``mass'' equal to $L-D$ concentrated at the two vertices, and correspond to a one-dimensional Laplacian with generalised Wentzell boundary conditions at one endpoint. More precisely, consider the problem
\begin{equation*}
\begin{aligned}
	-u''(x) &= \kappa^2 u(x) \qquad \text{in $(0,D)$}\\
	u''(0)- \frac{2}{L-D}u'(0) &=0 \\
	u''(D)+ \frac{2}{L-D}u'(D) &=0 .
\end{aligned}
\end{equation*}
The first antisymmetric eigenfunction corresponds to the problem on the half interval  
\begin{equation*}
\begin{aligned}
	-u''(x) &= \kappa^2 u(x) \qquad \text{in $(0,D/2)$}\\
	u''(D/2)+ \frac{2}{L-D}u'(D/2) &=0 \\
	u(0)&=0.
\end{aligned}
\end{equation*}
Then any eigenfunction has the form $\psi(x) = \sin(\kappa x)$, and a short computation using the condition at $x=D$ shows that $\kappa$ satisfies \eqref{eq:dl-trans-1} if and only if it solves this problem. (For \eqref{eq:dl-trans-2} consider the same problem on $(0,2D)$.)
\end{remark}

Important for the proof will be the following class of graphs.

\begin{definition}\label{def:ssd}
A \emph{symmetric star dumbbell} graph (or SSD graph for short) $\mG$ of diameter $D$ and star size $\ell$ is a graph consisting of an edge $\me_0$ (the handle) of length $D-2\ell$ between two vertices $\mv_1$ and $\mv_2$, with identical star graphs $\mS_1$, $\mS_2$, each consisting of $m\geq 2$ edges each of length $\ell$ attached to $\mv_1$ and $\mv_2$, respectively. \end{definition}

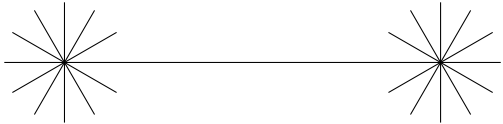
\begin{figure}
\begin{tikzpicture}
\begin{scope}
\coordinate (A) at (0,0);
\coordinate (D) at (5,0);
\draw (A) -- (D);
\draw (A) -- (30:0.8cm);
\draw (A) -- (60:0.8cm);
\draw (A) -- (90:0.8cm);
\draw (A) -- (120:0.8cm);
\draw (A) -- (150:0.8cm);
\draw (A) -- (180:0.8cm);
\draw (A) -- (210:0.8cm);
\draw (A) -- (240:0.8cm);
\draw (A) -- (270:0.8cm);
\draw (A) -- (300:0.8cm);
\draw (A) -- (330:0.8cm);
\end{scope}
\begin{scope}[xshift=5cm]
\draw (0,0) -- (30:0.8cm);
\draw (0,0) -- (60:0.8cm);
\draw (0,0) -- (90:0.8cm);
\draw (0,0) -- (120:0.8cm);
\draw (0,0) -- (150:0.8cm);
\draw (0,0) -- (210:0.8cm);
\draw (0,0) -- (240:0.8cm);
\draw (0,0) -- (270:0.8cm);
\draw (0,0) -- (300:0.8cm);
\draw (0,0) -- (330:0.8cm);
\draw (0,0) -- (360:0.8cm);
\end{scope}
 \end{tikzpicture}
\caption{A symmetric star dumbbell graph on twenty-two rays.}
\label{FigSSD}
\end{figure}

The next lemma shows that these graphs have a small first eigenvalue for given $D$ and $L$. Our graphs do not need to be \emph{star} dumbbells; it is merely important that as much mass as possible be concentrated as near the endpoints as possible. However, stars have the advantage that their eigenvalues can be more easily computed.

\begin{lemma}
\label{lem:ssd-comparison}
Given any quantum graph $\mG$ satisfying Assumption~\ref{ass:graph}, there is an SSD graph $\mT$ with $L(\mT)\leq L(\mG)$, $D(\mT)\leq \min\{2D(\mG),L(\mG)\}$, and $\lambda_1(\mT) \leq \lambda_1(\mG)$.
\end{lemma}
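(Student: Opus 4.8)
The plan is to locate a point where a $\lambda_1(\mG)$-eigenfunction changes sign, to cut $\mG$ there into two pieces each carrying a Dirichlet condition, to replace each piece by a path ending in an equilateral star (a ``broom'') without raising its bottom eigenvalue, and finally to assemble $\mT$ by reflecting the lighter of the two brooms. Note at the outset that the bound $D(\mT)\le L(\mG)$ will be automatic, since $D(\mathsf H)\le L(\mathsf H)$ for every connected metric graph $\mathsf H$; so it suffices to produce an SSD graph $\mT$ with $\lambda_1(\mT)\le\lambda_1(\mG)$, $L(\mT)\le L(\mG)$ and $D(\mT)\le 2D(\mG)$.

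\emph{Step 1: cutting at the nodal point.} Let $\psi$ be an eigenfunction for $\lambda:=\lambda_1(\mG)$. By standard nodal-domain considerations (choosing $\psi$ suitably inside its eigenspace if $\lambda$ is degenerate; the general case where the nodal set is larger is handled the same way, cutting along that set) we may assume $\psi$ changes sign exactly at one point $x_0$, which is then a cut vertex of $\mG$, so $\mG=\mG_1\cup\mG_2$ with $\mG_1\cap\mG_2=\{x_0\}$, $\psi\ge0$ on $\mG_1$ and $\psi\le0$ on $\mG_2$. Then $\psi|_{\mG_i}$ is a positive ground state of the Laplacian on $\mG_i$ with a Dirichlet condition at $x_0$ and natural conditions elsewhere; writing $\mu_1(\mG_i)$ for that lowest eigenvalue, $\mu_1(\mG_1)=\mu_1(\mG_2)=\lambda$. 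Putting $L_i:=L(\mG_i)$ and $\rho_i:=\sup_{z\in\mG_i}\dista(x_0,z)$, we have $L_1+L_2=L(\mG)$, and since every path joining $\mG_1\setminus\{x_0\}$ to $\mG_2\setminus\{x_0\}$ must pass through $x_0$, also $\rho_1+\rho_2=\sup\{\dista(a,b):a\in\mG_1,\ b\in\mG_2\}\le D(\mG)$; in particular each $\rho_i\le D(\mG)$.

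\emph{Step 2: reducing each half to a broom.} Fix $i$. This step is the main obstacle: it requires the Dirichlet analogues of Lemma~\ref{lem:principles} — that pulling a vertex $\ne x_0$ apart, and relocating a pendant subgraph to a vertex farther from $x_0$, do not increase $\mu_1$ — together with a careful rearrangement. Using these, one first unfolds $\mG_i$ into a tree rooted at $x_0$ by splitting each non-tree edge at a suitable interior point, exactly as in the proof of Lemma~\ref{lem:reduction} and so that no point ends up farther than $\rho_i$ from $x_0$; then one repeatedly pushes pendant subtrees outward along the deepest branch until all the length lying beyond a handle of length at most $\rho_i$ has been collected into a single equilateral star at the far end. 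This yields a broom $B_i$ with $\mu_1(B_i)\le\mu_1(\mG_i)=\lambda$, $L(B_i)\le L_i$, and every point of $B_i$ within $\rho_i$ of the Dirichlet vertex; if the star has only one edge, split it once and pull it apart to make its multiplicity at least $2$, which only improves all three estimates. The nontrivial checks are that each move of the rearrangement genuinely does not raise $\mu_1$, does not raise the length, and does not raise the distance to the Dirichlet vertex.

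\emph{Step 3: reflecting the lighter broom.} Let $B$ be whichever of $B_1,B_2$ has smaller total length, so $L(B)\le\tfrac12 L(\mG)$; glue two isometric copies of $B$ along their Dirichlet ends and suppress the resulting degree-two vertex. The result is a symmetric star dumbbell $\mT$, and by construction $L(\mT)=2L(B)\le L_1+L_2=L(\mG)$, while $D(\mT)$ equals twice the radius of $B$ about its Dirichlet end, hence $D(\mT)\le 2\rho_i\le 2D(\mG)$; combined with $D(\mT)\le L(\mT)$ this gives $D(\mT)\le\min\{2D(\mG),L(\mG)\}$. Finally, if $\chi\ge0$ denotes the broom ground state (vanishing at the Dirichlet end), then the function on $\mT$ equal to $\chi$ on one copy and $-\chi$ on the other lies in $H^1(\mT)$, has mean zero by symmetry, and has Rayleigh quotient exactly $\mu_1(B)$; therefore $\lambda_1(\mT)\le\mu_1(B)\le\lambda=\lambda_1(\mG)$, as required. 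The factor $2$ in the diameter — which is genuinely necessary, since a long cycle has small diameter yet gets unfolded into a near-path — enters only here, where the handle of $\mT$ is twice the handle of a single broom.
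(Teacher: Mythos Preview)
Your Step~2 is a genuine gap, and it is precisely the step you flag as ``the main obstacle''. Two concrete problems. First, the transplantation principle you invoke --- that detaching a pendant subgraph from a vertex $a$ and reattaching it at a vertex $b$ farther from the Dirichlet point $x_0$ cannot raise $\mu_1$ --- is not among the variational principles available in this paper (Lemma~\ref{lem:principles} covers joining/splitting vertices, attaching pendants, and lengthening edges, but not relocating pendants), and it is not obvious: it would require knowing that the ground state $\psi|_{\mG_i}$ is monotone along paths from $x_0$, which need not hold on a general tree once branches are present. Second, even granting such a principle, pushing a pendant subtree from $a$ (at distance $d_a$ from $x_0$) to $b$ (at distance $d_b>d_a$) moves all the leaves of that subtree from depth $d_a+\text{(subtree depth)}$ to $d_b+\text{(subtree depth)}$, which will in general exceed $\rho_i$; so the very constraint ``every point of $B_i$ within $\rho_i$ of the Dirichlet vertex'' that you need for the diameter bound is destroyed by the rearrangement you propose. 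Your construction also does not explain how an \emph{equilateral} star emerges at the end. As written, the broom $B_i$ with the required properties has not been produced.

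The paper avoids all of this by a more direct route: rather than performing surgery on $\mG^+$ to \emph{turn it into} a broom, it simply writes down a test function on a prescribed SSD graph $\mT$ (of diameter $2d$ with $d=\sup_{z\in\mG^+}\dist(z,\mv_0)$ and total length $L$) using only the values of $\psi$ along a single shortest path from $\mv_0$ to the point where $\psi$ attains its maximum, extended by that maximum value on the rest of $\mT^+$ and reflected oddly. Comparing Rayleigh quotients on $\mG^+$ and $\mT^+$ is then a straightforward pointwise estimate ($|\varphi'|$ is supported on the path and equals $|\psi'|$ there; $|\varphi|$ is essentially the maximum of $|\psi|$ off the path). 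This is both shorter and uses nothing beyond the min--max characterisation; your Steps~1 and~3 are essentially the same as the paper's, so replacing your Step~2 by this test-function construction completes the argument.
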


An inspection of our proof shows that if the graph $\mG$ is symmetric about a path representing the diameter, then one can find an SSD graph having the same diameter as $\mG$, with the other conclusions of Lemma~\ref{lem:ssd-comparison}. One can also find another graph $\mT'$ with $D(\mT') \leq D(\mG)$, $L(\mT')\leq 2L(\mG)$ and $\lambda_1(\mT') \leq \lambda_1(\mG)$. Moreover, if $\mG$ is already an SSD graph, then we can find an SSD graph $\mT''$ with $D(\mT'')=D(\mG)$, $L(\mT'')=L(\mG)$ and $\lambda_1(\mT'')\leq \lambda_1(\mG)$.

\begin{proof}
Given $\mG$ with length $L$ and diameter $D$, we take any eigenfunction $\psi$ corresponding to $\lambda_1(\mG)$. If $\psi$ vanishes identically on an edge, or a collection of edges, then we may remove them from $\mG$, decreasing $D$ and $L$, and leaving $\lambda_1$ unchanged. So we may assume that the set where $\psi = 0$ consists of finitely many points. We turn these into (artificial) vertices and identify them, which cannot increase $D$. By Lemma~\ref{lem:principles}.(2), this can only increase $\lambda_1$, but since $\psi$ remains an eigenfunction which is obviously still associated with the first non-trivial eigenvalue, as the new graph cannot have a smaller one, $\lambda_1$ remains in fact unchanged.

Hence we may assume that $\mG$ consists of two subgraphs $\mG^+$ and $\mG^-$, joined at a single vertex $\mv_0$, with a first eigenfunction $\psi$ satisfying $\psi>0$ on $\mG^+$ and $\psi<0$ on $\mG^-$. In particular, $\psi$ is also a first eigenfunction (with $\lambda_1$ the corresponding first eigenvalue) of the mixed Dirichlet--Neumann/Kirchhoff problem on $\mG^+$ and $\mG^-$, i.e., where we impose a Dirichlet condition at $\mv_0$ and the usual continuity and Kirchhoff conditions on all other vertices. Moreover, it is immediate that
\begin{displaymath}
	\sup\{\dist(x,\mv_0):x\in\mG^+\}, \, \sup\{\dist(x,\mv_0):x\in\mG^-\} \leq D\ ,
\end{displaymath}
and at least one of these graphs, say $\mG^+$, has total length not more than $L/2$. (Of course, one of them will also have this supremum at most $D/2$, but its length may then be large. Choosing this graph would lead to the comparison with $\mT'$.) Set 
\[
d:= \sup\{\dist(x,\mv_0):x\in\mG^+\} \leq \min\{D,L/2\}\ .
\]
Denote by $\mT$ an SSD graph having total length $L$ and diameter $2d$, where the length of the edges of the star graphs will be fixed later. Identify the midpoint (with respect to which $\mT$ is symmetric) with $0$, and the end vertices -- i.e., the vertices of degree one in $\mS_1$ and $\mS_2$, cf.\ Definition~\ref{def:hubgraph} -- with $-d$ and $d$, respectively. Denote the right half of the graph by $\mT^+$.

We will now construct a test function $\varphi$ on $\mT$ out of $\psi$. Let $z \in \mG^+$ be such that $\psi$ reaches its maximum $M>0$ in $\mG^+$ at $z$; then in particular $s:=\dist(z,\mv_0) \leq d$. Denote by $\Gamma$ any shortest path from $\mv_0$ to $z$. Now we map the values of $\psi$ on $\Gamma$ onto $\mT$: for $x \in [0,s] \subset [0,d]$ let $\varphi(x) = \psi(w)$, where $\dist(w,\mv_0)=x$, so that $\varphi(0)=0$ and $\varphi(s)=M$. If $s<d$, then extend $\varphi$ by $M$ on $(s,d]$, and on the star graph $\mS_2$ let $\varphi$ take on the value it takes on at $\mv_2$. We then take $\varphi$ to be symmetric on $\mT$; it is clear that $\varphi \in H^1(\mT)$. 

Now fix $\varepsilon>0$, to be specified later. We assume that $\mv_2$ is near enough to $d$ that $\varphi(\mv_2) \geq M-\varepsilon$. It remains to show that the Rayleigh quotient of $\varphi$ is not larger than that of $\psi$ on $\mG$; note that we only have to consider $\mG^+$, as it follows from the fact that $\psi$ is an eigenfunction and that $\mG^+$ is one of its nodal domains that
\begin{displaymath}
	\lambda_1(\mG) = \frac{\int_{\mG^+}|\psi'|^2}{\int_{\mG^+}|\psi|^2}.
\end{displaymath}
By symmetry, we also only have to consider $\varphi$ on $\mT^+$. Now it is clear from the construction that
\begin{displaymath}
	\int_{\mT^+} |\varphi'|^2 = \int_{(0,s)}|\varphi'|^2 = \int_\Gamma |\psi'|^2 \leq \int_{\mG^+}|\psi'|^2.
\end{displaymath}
We claim that
\begin{displaymath}
	\int_{\mT^+}|\varphi|^2 \geq \int_{\mG^+}|\psi|^2
\end{displaymath}
if $\varepsilon>0$ is small enough. If $d=L/2$, then $\mG^+$ and $\mT^+$ are both just path graphs, and it is clear there is equality between the corresponding eigenvalues. If $d<L/2$, then $L(\mG^+) \leq L(\mT^+) = L/2$; moreover, since $\psi$ reaches its maximum $M$ on a finite set of points, there exists $\delta>0$ such that
\begin{displaymath}
	\int_{\mG^+ \setminus \Gamma} |\psi|^2 \leq (M-\delta)^2 L(\mG^+ \setminus \Gamma)
	\leq (M-\delta)^2 (\frac{L}{2}-s).
\end{displaymath}
If we choose $\varepsilon \in (0,\delta)$, then, since $L(\mS_2 \cup (s,d)) = L/2 - s$,
\begin{displaymath}
	\int_{\mT^+\setminus (0,s)}|\varphi|^2 = \int_{\mS_2 \cup (s,d)} |\varphi|^2
	\geq (M-\varepsilon)^2 \left(\frac{L}{2}-s\right) > (M-\delta)^2 \left(\frac{L}{2}-s\right).
\end{displaymath}
Since clearly
\begin{displaymath}
	\int_{(0,s)}|\varphi|^2 = \int_\Gamma |\psi|^2,
\end{displaymath}
our claim follows, and we have shown that $\lambda_1(\mT) \leq \lambda_1(\mG)$. Since $\mT$ has total length $L$ and diameter $2d \leq 2D$, we are done.
\end{proof}

We now show that the spectral gap of a sequence $\mT_n$ of SSD graphs of fixed diameter and total length converges to the claimed equation as the stars become smaller and more concentrated.

\begin{lemma}
\label{lem:ssd-computation}
Suppose that $\mT_n$ is a sequence of SSD graphs with $D(\mT_n)=D$ and $L(\mT_n)=L$ for all $n$, such that $\ell_n \to 0$. Then $\lambda_1(\mT_n)$ is greater than and converges to $\kappa^2$, where $\kappa$ is the first positive solution of
\begin{displaymath}
	\cos \left(\kappa \frac{ D}{2}   \right)  = \kappa \frac{L-D}{2} \sin \left(\kappa \frac{ D}{2} \right).
\end{displaymath}
\end{lemma}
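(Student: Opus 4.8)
The plan is to reduce $\lambda_1(\mT_n)$ to a one-dimensional weighted eigenvalue problem and then squeeze it between $\kappa^2$ and $\kappa^2+o(1)$, using for the lower bound a comparison with the generalised Wentzell problem of Remark~\ref{rem:lowerbd-dl}(c).

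\textbf{Reduction.} Write $m_n\geq 2$ for the number of rays in each star of $\mT_n$; counting lengths, $(D-2\ell_n)+2m_n\ell_n=L$, that is
\[
(m_n-1)\,\ell_n=\tfrac{L-D}{2}.
\]
Using both the reflection symmetry of $\mT_n$ and the symmetry permuting the rays inside each star, I would average any eigenfunction of $\lambda_1(\mT_n)$ exactly as in the proof of Lemma~\ref{lemma:level}: either one obtains a first eigenfunction taking a common value on all rays of each star and odd about the midpoint, or every first eigenfunction is supported on the stars with a Dirichlet condition at the star centre, which would force $\lambda_1(\mT_n)\geq\pi^2/(4\ell_n^2)$. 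For $n$ large the latter is excluded by the uniform bound $\lambda_1(\mT_n)\leq\frac{\pi^2}{D^2}\cdot\frac{4L-3D}{D}$ of Theorem~\ref{th:dl-upper}. Identifying $\mT_n$ with $[-D/2,D/2]$ via the level function (the handle becoming $(-D/2+\ell_n,D/2-\ell_n)$, each star collapsing to an end interval of length $\ell_n$ carrying the weight $m_n$, the handle carrying weight $1$), the reduced problem is the weighted Neumann problem of \eqref{eq:1D}, and $\lambda_1(\mT_n)$ is its first nontrivial eigenvalue $\lambda_1(\rho_n)$. By one-dimensional Sturm--Liouville theory the corresponding eigenfunction $u_n$ has a single zero; since $\rho_n$ is even, $u_n$ is odd, and an explicit look gives $u_n(x)=\sin(\kappa_n x)$ on the handle and $u_n(x)=\pm C_n\cos(\kappa_n(D/2-|x|))$ on the ends, where $\kappa_n:=\sqrt{\lambda_1(\mT_n)}$ and $C_n>0$. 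In particular $|u_n|$ is nondecreasing on each end interval for $n$ large, since the upper bound below forces $\kappa_n\to\kappa<\pi/D$.

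\textbf{Upper bound.} For the test function take $v_n$ equal to $\sin(\kappa x)$ on the handle and extended by the constants $\pm\sin(\kappa(D/2-\ell_n))$ on the two end intervals, $\kappa$ being the first positive root of the stated equation. Then $v_n$ is odd, hence has $\rho_n$-mean zero, and being flat on the ends it satisfies $\int (v_n')^2\rho_n=\int_{-D/2+\ell_n}^{D/2-\ell_n}\kappa^2\cos^2(\kappa x)\,dx$ while $\int v_n^2\rho_n=\int_{-D/2+\ell_n}^{D/2-\ell_n}\sin^2(\kappa x)\,dx+2m_n\ell_n\sin^2(\kappa(D/2-\ell_n))$. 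Letting $n\to\infty$ and using $m_n\ell_n\to (L-D)/2$,
\[
\lambda_1(\mT_n)\leq\frac{\int_{-D/2}^{D/2}\kappa^2\cos^2(\kappa x)\,dx}{\int_{-D/2}^{D/2}\sin^2(\kappa x)\,dx+(L-D)\sin^2(\kappa D/2)}+o(1),
\]
and a short trigonometric identity shows that the main term equals $\kappa^2$ \emph{precisely} because $\cos(\kappa D/2)=\kappa\tfrac{L-D}{2}\sin(\kappa D/2)$. Hence $\limsup_n\lambda_1(\mT_n)\leq\kappa^2$.

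\textbf{Lower bound with strictness.} Compare the weighted Rayleigh quotient of $u_n$ with the variational problem for the generalised Wentzell operator of Remark~\ref{rem:lowerbd-dl}(c): minimise $\int_{-D/2}^{D/2}(w')^2$ over $w\in H^1(-D/2,D/2)$ subject to $\int_{-D/2}^{D/2}w+\tfrac{L-D}{2}(w(-D/2)+w(D/2))=0$, normalised by $\int_{-D/2}^{D/2}w^2+\tfrac{L-D}{2}(w(-D/2)^2+w(D/2)^2)$. Its minimum is the first nontrivial eigenvalue of that problem, which is attained by the odd function $\sin(\kappa x)$ and (being smaller than the first symmetric mode) equals $\kappa^2$. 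Taking $w=u_n$, which is odd and hence admissible, one has $\int (u_n')^2<\int (u_n')^2\rho_n$ strictly since $\rho_n=m_n>1$ on the ends and $u_n'\not\equiv0$ there; and in the denominator, using $|u_n|\leq|u_n(\pm D/2)|$ on the ends together with the identity $(m_n-1)\ell_n=\tfrac{L-D}{2}$,
\[
\int u_n^2\rho_n=\int_{-D/2}^{D/2}u_n^2+(m_n-1)\int_{\mathrm{ends}}u_n^2\leq\int_{-D/2}^{D/2}u_n^2+\tfrac{L-D}{2}\big(u_n(-D/2)^2+u_n(D/2)^2\big).
\]
Combining, $\lambda_1(\mT_n)=\dfrac{\int (u_n')^2\rho_n}{\int u_n^2\rho_n}>\dfrac{\int (u_n')^2}{\int u_n^2\rho_n}\geq\kappa^2$. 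Together with the upper bound this yields $\kappa^2<\lambda_1(\mT_n)\to\kappa^2$ for $n$ large (the finitely many remaining indices being handled directly).

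\textbf{Main obstacle.} The fiddly part is the first step: excluding star-localised first eigenfunctions and pinning down the exact form and monotonicity of $u_n$, although this is routine given the averaging technique of Lemma~\ref{lemma:level}, the weighted reformulation \eqref{eq:1D}, and Theorem~\ref{th:dl-upper}. The genuinely load-bearing observation is the elementary mass identity $(m_n-1)\ell_n=(L-D)/2$: it is what makes the Wentzell comparison produce both the strict inequality and no residual error term, and matching the bookkeeping of the end intervals exactly against the point-mass term $\tfrac{L-D}{2}(w(\pm D/2)^2)$ is the single place that needs care.
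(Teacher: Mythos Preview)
Your proof is correct and takes a genuinely different route from the paper's. The paper computes the secular equation for $\kappa_n=\sqrt{\lambda_1(\mT_n)}$ explicitly, rewrites it as $g^n(k):=(n+1)\cos(kD/2)-(n-1)\cos\bigl(k(D/2-\tfrac{L-D}{n-1})\bigr)=0$, and then uses the tangent-line bound $\cos(a-h)\leq\cos a+h\sin a$ to show $g^n(k)\geq 2g^\infty(k)$ with $g^\infty(k)=\cos(kD/2)-k\tfrac{L-D}{2}\sin(kD/2)$; this yields both the strict lower bound and, via pointwise convergence $g^n\to 2g^\infty$, the limit. Your variational argument bypasses the secular equation entirely and links the problem directly to the point-mass/Wentzell limit of Remark~\ref{rem:lowerbd-dl}(c), making the role of the mass identity $(m_n-1)\ell_n=(L-D)/2$ transparent; the paper's approach is more elementary (pure trigonometry) but less conceptual. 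Two small remarks: your parenthetical that the first \emph{symmetric} Wentzell mode lies above $\kappa^2$ is asserted without proof---it is true (the even secular equation $\tan(\sqrt{\mu}D/2)=-\sqrt{\mu}\tfrac{L-D}{2}$ has no root with $\sqrt{\mu}<\pi/D$), but since $u_n$ is odd you could simply restrict the Wentzell minimisation to odd competitors and avoid the issue; and for the end-interval monotonicity of $|u_n|$ you invoke $\kappa_n\to\kappa$, which is circular at that stage---the boundedness of $\kappa_n$ from Theorem~\ref{th:dl-upper} together with $\ell_n\to0$ already suffices.
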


\begin{proof}
Assume that the SSD graph is formed by one edge of length $ d_n $ and $ 2 n $ ``short'' edges of lengths $ \ell_n $ attached to two vertices as shown in Figure \ref{FigSSD}. Elementary calculations allow one to
determine $ d_n $ and $ \ell_n $ through $ D $ and $ L $
$$ \left\{
\begin{array}{ccl}
D & = & 2 \ell_n + d_n \\
L & = & 2n \ell_n + d_n
\end{array} \right. \Rightarrow \left\{
\begin{array}{ccl}
\ell_n & = & \displaystyle \frac{L-D}{2 (n-1)} \\[3mm]
d_n & = & \displaystyle \frac{nD -L }{n-1}
\end{array} \right. . $$
We are interested in the case of large $ n $, when $ d_n \gg \ell_n .$
The lowest eigenvalue is $ \lambda_0 = 0 $ with the eigenfunction $ \psi_0 \equiv 1; $ for sufficiently large $ n $ the first nontrivial eigenfunction is antisymmetric with respect to the reflection in the middle point of the dumbbell, and hence zero at that point. By symmetry, it will also attain the same values on all smaller edges attached to each of the vertices, and the Kirchhoff condition implies its normal derivative is zero at every boundary vertex. Taking into account that the function is continuous and the sum of normal derivatives at the inner vertices is zero, we derive
 the following dispersion equation
$$ k \cot \left(k \frac{ nD-L}{2(n-1)}\right) = nk \tan \left(k \frac{ L-D}{2(n-1)}\right). $$
We are interested in its solution in the interval $ 0 < k < \displaystyle \frac{\pi}{D}. $ 
Multiplying the equation by $ \sin \left(k \frac{ nD-L}{2(n-1)}\right) \cdot \cos \left(k \frac{ L-D}{2(n-1)}\right) $ and using trigonometric identities, this equation can be written in an equivalent form
$$  g^n (k) := (1+n) \cos \left(k \frac{D}{2}\right) - (n-1) \cos \left(k \left( \frac{D}{2} - \frac{L-D}{n-1} \right)\right) = 0. $$
The function $ g^n (k) $ is obviously positive when $ k= 0 $ and $ k_1 $ is its first zero on the positive semi-axis. Using the fact that the graph of the cosine function on the interval $ (0, \pi/2) $ lies below its tangent line, we have
$$ \cos \left(k \left( \frac{D}{2} - \frac{L-D}{n-1} \right)\right) \leq \cos \left(k \frac{D}{2} \right) + k\frac{L-D}{n-1} \sin\left( k \frac{D}{2}\right)\cdot , $$
which in turn implies that
\begin{equation*}
\begin{split}
g^n(k) & \geq   (1+n) \cos \left(k \frac{D}{2}\right) - (n-1) \left( \cos \left(k \frac{D}{2}\right) + k  \frac{L-D}{n-1} \sin \left(k \frac{D}{2}\right) \right) \\
& = 2 \left( \cos \left(k \frac{D}{2}\right) - k \frac{L-D}{2} \sin \left(k \frac{D}{2}\right) \right). 
\end{split} 
\end{equation*}
It follows that $ k_1 $ lies to the right of the lowest positive solution to the equation
$$ g^\infty (k) :=  \cos \left(k \frac{D}{2}\right) - k \frac{L-D}{2} \sin \left(k \frac{D}{2}\right) = 0. $$
On the other hand $ k_1 $ approaches the first zero of $ g^\infty (k) $ as $ n \rightarrow \infty $, since $ g^n $ converges to $ g^\infty $ pointwise.
\end{proof}

\begin{proof}[Proof of Theorem~\ref{th:dl-lower}]
The proof of \eqref{eq:dl-trans-2} follows directly from combining Lemmata~\ref{lem:ssd-comparison} and \ref{lem:ssd-computation}. To see that \eqref{eq:dl-lower} follows from \eqref{eq:dl-trans-2}, we merely note that since $\cos x \geq 1 - x^2/2$ and $\sin x \leq x$ for all $x\geq 0$, it follows from \eqref{eq:dl-trans-2} that $2D(L-2D)\kappa^2 \geq 1-(2\kappa D)^2/2$; rearranging yields $\kappa^2 \geq 1/(2DL-2D^2)$.
\end{proof}

\section{Estimates on the spectral gap of the discrete Laplacian}\label{sec:discrete}

The \emph{normalised Laplacian} $\mathcal{L}_{\rm norm}$ of a graph was implicitly introduced in~\cite{Fie73}, but it has only enjoyed broad interest since Chung's thorough investigations of its properties, summarised in~\cite{Chu97}, which link combinatorics, spectral theory and geometry.
It is defined as the symmetric, $V\times V$-matrix whose diagonal entries are 1 and whose off-diagonal entry corresponding to the vertices $\mv,\mw\in \mV$ is given by $-(\deg(\mv)\deg(\mw))^{-\frac12}$, where $\deg(\mv)$ denotes the degree of vertex $\mv$, i.e., the number of edges incident to it. The normalised Laplacian always has real spectrum and in fact one can show by simple variational methods that its eigenvalues are all positive, cf.~\cite[Chapter 1]{Chu97}.
(Another popular matrix in graph theory is the \emph{discrete Laplacian}, thoroughly studied since~\cite{Fie73}, in which each non-zero off-diagonal entry of the normalised Laplacian is replaced by $-1$ and each diagonal entry equals minus the sum of the non-diagonal entries on the same row.)

In the introduction we observed that our estimates on $\lambda_1$ in the special case of equilateral metric graphs can be turned into estimates on the spectral gap $\alpha_1$ of the normalised Laplacian, and vice versa, by the formula~\eqref{eq:below}. Remarkably, some estimates obtained by studying quantum graphs turn into estimates on $\alpha_1$ that currently seem to be unavailable by purely combinatorial methods.

Let us write such estimates down for the case of the normalised Laplacian, which corresponds to equilateral quantum graphs with edges of unit length (in which case we have $L=E$). 
{
It follows from~\eqref{eq:below} that
\[
\alpha_1=1-\cos\sqrt{\lambda_1}\qquad \hbox{whenever }\lambda_1< \pi^2\ ,
\] 
hence $\alpha_1$ is a monotonically increasing function of $\lambda_1$ in the range $\lambda_1\in [0,\pi^2)$.}

For instance, from our first upper bound $0\le \lambda_1\le \frac{\pi^2 E^2}{L^2}=\pi^2$ we deduce the well-known fact that $\alpha_1$ always lies in $[0,2]$ -- in fact, so do \emph{all} eigenvalues of the normalised Laplacian. Furthermore, from our results in Sections~\ref{sec:diam-pos} and~\ref{sec:diam-length}, we can obtain the following upper bounds:
\medskip
\begin{center}
	\begin{tabular}{r|c|l}
\backslashbox{parameters}{$\alpha_1$} & available upper bound & holds if \\ \hline & &\\
$D_\mV$, $V$  & $\displaystyle 1-\cos \frac{\pi}{D_\mV}(V+1) $ & $V+1\le D_\mV$ \\[12pt] \hline 
& &\\
$D_\mV$, $E$ & $\displaystyle 1-\cos\left(\frac{\pi}{D_\mV}\sqrt{\frac{4E}{D_\mV}-3}\right)$ & $\displaystyle \sqrt{\frac{4E}{D_\mV}-3}\le D_\mV$
			\end{tabular}
	\label{tab:resume-2a-comb}
\end{center}
The first bound can only hold for paths, for which however the second bound is sharper (and in fact tight). Here we need the combinatorial diameter $D_\mV \leq D$ of Remark~\ref{rem:vertex-d}, since the diameter $D$ is meaningless in the case of combinatorial graphs. 

For equilateral quantum graphs with unit side length, which are the only ones we need to consider, we necessarily have $D \leq D_\mV + 1$, since points realising the diameter can be at distance no more than $1/2$ from a vertex. Hence we have the following estimates, which are valid for all graphs:
\begin{center}
	\begin{tabular}{r|c}
		\backslashbox{parameter}{$\alpha_1$} & available lower bound \\ \hline \\
$E$ & $\displaystyle 1-\cos\frac{\pi}{E}$ \\[12pt] \hline \\
$D_\mV$, $E$ & $\displaystyle 1-\cos\frac{\pi}{(D_\mV+1) E}$ \\[12pt] \hline \\
$D_\mV$, $E$ & $\displaystyle 1-\cos\frac{1}{\sqrt{(D_\mV+1) E}}$ \\[12pt]
			\end{tabular}
	\label{tab:resume-2b-comb}
\end{center}
The first bound is always better than the second (and in fact tight for paths, cf.~\cite[Example~1.4]{Chu97}), while the third -- which can be improved slightly by using \eqref{eq:dl-trans-2} rather than the simpler \eqref{eq:dl-lower} -- is better than the first if $E/(D_\mV+1) \geq \pi^2$ (e.g., for all complete graphs on more than six vertices).  

By estimating $\cos$ by a truncated power series expansion, we can summarise our bounds as follows:
\begin{equation}
\label{eq:ours}
\left.
\begin{array}{r}
\displaystyle \frac{\pi^2}{2E^2}-\frac{\pi^4}{24 E^4}<1-\cos\frac{\pi}{E}\\
\displaystyle \frac{1}{2(D_\mV+1)E} - \frac{1}{24(D_\mV + 1)^2 E^2}<1-\cos\frac{1}{\sqrt{(D_\mV+1) E}}\\
\end{array} \right\}
\le \alpha_1 \le 1-\cos\left(\frac{\pi}{D_\mV}\sqrt{\frac{4E}{D_\mV}-3}\right)< 
\frac{\pi^2}{2 D_\mV^2}\frac{4E-3D_\mV}{D_\mV} \ ,
\end{equation}
where the upper bound holds if $4E\le D^2_V+3D_V$ (this latter condition holds e.g.\ for all paths, for cycles on more than 6 vertices, but not for non-trivial complete graphs). Both the (non-truncated) lower bounds are asymptotically (in $E$) tight for cycles and hypercube graphs, cf.~\cite[Examples~1.5--1.6]{Chu97}. Moreover, in the particular case of path graphs, the first lower bound and the upper bound jointly give the spectral gap \emph{exactly}.

These can be compared with known estimates obtained by combinatorial means, such as
\begin{equation}
\label{eq:theirs}
\left.
\begin{array}{r}
\displaystyle \frac{1}{2D_\mV E}\\
\displaystyle \frac{1}{(\deg_{\max{}}+1)\deg_{\max{}}^{\lceil D_V/2 \rceil-1}}\\
\end{array} \right\}
\le \alpha_1\le
\left\{ 
\begin{array}{l}
\displaystyle \frac{V}{V-1}\\
\displaystyle  1-2\frac{\sqrt{\deg_{\max{}}-1}}{\deg_{\max{}}}\left(1-\frac{2}{D_\mV}\right)+\frac{2}{D_\mV}
 \end{array}
 \right.
\end{equation}
(see~\eqref{eq:spielman-revis},~\cite[Lemma~1.9 and Lemma 1.14]{Chu97} and \cite{ButChu13}), the first upper bound being tight for complete graphs and asymptotically (in $V$) tight for stars and complete bipartite graphs, cf.~\cite[Examples~1.1--1.3]{Chu97}, the second upper bound holding whenever $D_\mV\ge 4$. However, we see that the bounds in \eqref{eq:theirs} are not tight (in fact, two of them are not even asymptotically tight) on path graphs. Also, our upper bound yields
\[
1-\cos \frac{2\pi}{E}=\alpha_1\le 1-\cos \frac{2\pi\sqrt{5}}{E}
\]
for cycles of even length $E\ge 6$, to be compared with the combinatorial estimates by $\frac{E}{E-1}$ and $\frac{8}{E}$ in~\eqref{eq:theirs}. Hence, at least for some graphs, the bounds in \eqref{eq:ours} are actually sharper.

\section{Concluding remarks}\label{sec:concl}

It is easy to find perturbations of a graph which have an arbitrarily small effect on the first eigenvalue but which change the combinatorics of the graph enormously; we saw this phenomenon in action in Example~\ref{ex:lev-lower}. This is an easy but essential consequence of the principle that the eigenvalues depend continuously on changes in length of a given edge, including when that edge is contracted to a point, cf.\ the results in~\cite[\S~4]{DelRos14}. The moral is that, at least on a small or local scale, the analytic properties of a graph are more important for determining $\lambda_1$ than its combinatorial ones; in particular, quantum and combinatorial graphs can be expected to diverge considerably in their heuristic behaviour. This intuitive rule has been -- we believe -- underpinned by our results throughout this article. Nevertheless, we expect that other global structural properties of quantum graphs are also essential, even though our investigations have only touched on these aspects peripherally.

Let us summarise the best bounds we currently have:

\medskip

\begin{center}
	\begin{tabular}{r|c|l}
\backslashbox{parameter(s)}{$\lambda_1$} & available upper bound & is it sharp?  \\ \hline 
& &\\[-5pt]
$L$, $E$ & $\displaystyle \frac{\pi^2 E^2}{L^2}$ &  yes, attained on a pumpkin graph \eqref{lambda1pumpkin}\\[12pt] \hline 
& & \\
$D$, $V$  & $\displaystyle \frac{\pi^2}{D^2}(V+1)^2 $ & only if $V=1$, on a loop\\[12pt] \hline
& & \\
$D$, $E$  & $\displaystyle \frac{4\pi^2 E^2}{D^2}$ & only if $E=1$, on a loop\\[12pt] \hline
& & \\
$D$, $L$ & $\displaystyle \frac{\pi^2(4L-3D)}{D^3}$ & yes, attained on a path graph \eqref{lambda1path} \\ 
			\end{tabular}
	\label{tab:resume-2a}
\end{center}

\noindent
	and

\medskip

\begin{center}
	\begin{tabular}{r|c|l}
		\backslashbox{parameter(s)}{$\lambda_1$} & available lower bound & is it sharp?   \\ \hline
	& &\\[-2pt]
$L$ & $\displaystyle \frac{\pi^2}{L^2}$ &   yes, attained on a path graph \eqref{lambda1path}; \\
& & $V, E$ cannot improve it (Example~\eqref{ex:lev-lower})  \\[12pt] \hline 
& & \\
$D$, $E$  & $\displaystyle \frac{\pi^2}{D^2 E^2}$ & only if $E=1$, on a path graph \\[12pt] \hline
&& \\
$D$, $L$ & $\displaystyle \frac{1}{2DL}$ & unknown, but cf.\ Remark~\ref{rem:lowerbd-dl} \\[3pt] 
			\end{tabular}
	\label{tab:resume-2b}
\end{center}

\medskip

Of course, this should be viewed as being only a starting point: apart from the question of finding the optimal bounds and optimisers in many of the above cases, there are many other natural properties of a graph one could consider, as well as the higher eigenvalues, and the spectral problems which arise if one replaces the natural (Kirchhoff) boundary conditions with more general conditions on the vertices, such as more general $\delta$ or perhaps $\delta'$ couplings,  cf.~\cite{BerKuc13}, or impose Dirichlet condition on the vertices of degree one.

\bibliographystyle{alpha}

\end{document}